\newtheorem{theorem}{Theorem}[section]
\newtheorem{lemma}[theorem]{Lemma}
\newtheorem{cor}[theorem]{Corollary}
\theoremstyle{definition}
\theoremstyle{remark}
\newtheorem{remark}[theorem]{Remark}
\numberwithin{equation}{section}
\let \la=\lambda
\let \e=\varepsilon
\let \d=\delta
\let \o=\omega
\let \a=\alpha
\let \f=\varphi
\let \O=\Omega
\let \ga=\gamma
\begin{document}
\title[A weak type estimate]
{A weak type estimate for rough singular integrals}

\author{Andrei K. Lerner}
\address{Department of Mathematics,
Bar-Ilan University, 5290002 Ramat Gan, Israel}
\email{lernera@math.biu.ac.il}

\thanks{The  author was supported by the Israel Science Foundation (grant No. 447/16).}

\begin{abstract}
We obtain a weak type $(1,1)$ estimate for a maximal operator associated with the classical rough homogeneous singular integrals $T_{\O}$.
In particular, this provides a different approach to a sparse domination for $T_{\O}$ obtained recently by Conde-Alonso, Culiuc, Di Plinio and Ou \cite{CCPO}.
\end{abstract}

\keywords{Rough singular integrals, sparse bounds, maximal operators.}
\subjclass[2010]{42B20, 42B25}

\maketitle

\section{Introduction}
In this paper we consider a class of rough homogeneous singular integrals defined by
$$T_{\Omega}f(x)=\text{p.v.}\int_{{\mathbb R}^n}f(x-y)\frac{\O(y/|y|)}{|y|^n}dy,$$
with $\O\in L^{q}(S^{n-1}), q\ge 1,$ having zero average.

Calder\'on and Zygmund \cite{CZ} proved that if $\O\in L\log L(S^{n-1})$, then $T_{\O}$ is bounded on $L^p$ for all $1<p<\infty$. The weak type $(1,1)$ of $T_{\O}$
was established by Christ \cite{C} and Hofmann \cite{H} in the case $n=2$ and $\O\in L^q(S^1), q>1$, and by Christ and Rubio de Francia \cite{CHR} for $\O\in L\log L(S^1)$. Finally Seeger
\cite{S} proved that $T_{\O}$ is weak $(1,1)$ bounded for $\O\in L\log L(S^{n-1})$ in all dimensions.

Notice that contrary to singular integrals with smooth kernels, for rough singular integrals the question whether the maximal singular integral operator
$$T_{\O}^{\star}f(x)=\sup_{\e>0}|T_{\O}(f\chi_{\{|\cdot|>\e\}})(x)|$$
is of weak type $(1,1)$ is still open even in the case when $\O\in L^{\infty}(S^{n-1})$.

On the other hand, weak type estimates of maximal truncations are important for the so-called sparse domination.
Sparse bounds for different operators is a recent trend in Harmonic Analysis (see, e.g., \cite{BFP,CCPO,CPO,PDU,La,La1,LS,L2},
and this list is far from complete). By sparse bounds one typically means a domination of the bilinear form $|\langle Tf,g \rangle|$ (for a given operator $T$)
by
$$\Lambda(f,g)=\sum_{Q\in {\mathcal S}}\langle f\rangle_{r,Q}\langle g\rangle_{s,Q}|Q|,$$
with suitable $1\le r,s<\infty$, where $\langle f\rangle_{p,Q}=\left(\frac{1}{|Q|}\int_Q|f|^p\right)^{1/p},$
and ${\mathcal S}$ is a sparse family of cubes from ${\mathbb R}^n$. We say that ${\mathcal S}$ is $\eta$-sparse, $0<~\eta~\le~1$, if for every cube
$Q\in {\mathcal S}$, there exists a measurable set $E_Q\subset Q$ such that $|E_Q|\ge \eta|Q|$, and the sets
$\{E_Q\}_{Q\in {\mathcal S}}$ are pairwise disjoint.
The advantage of sparse bounds is that they easily imply quantitative weighted estimates in terms of the Muckenhoupt and reverse H\"older constants.

In \cite{L2}, the following principle was established: if $T$ is a sublinear operator of weak type $(p,p)$, and the maximal operator
$$M_Tf(x)=\sup_{Q\ni x}\|T(f\chi_{{\mathbb R}^n\setminus 3Q})\|_{L^{\infty}(Q)}$$
is of weak type $(r,r)$, for some $1\le p\le r<\infty$, then $T$ is dominated pointwise by the sparse operator $\sum_{Q\in {\mathcal S}}\langle f\rangle_{r,Q}\chi_Q$.

While this principle perfectly works for smooth singular integrals, it seems to be not as useful for rough singular integrals $T_{\O}$.
Indeed, in this case the lack of smoothness does not allow to handle the $L^{\infty}$ norm appearing in the definition of $M_{T_{\Omega}}$
in an efficient way. Observe also that for $\O\in L^{\infty}(S^{n-1})$ the operator $M_{T_{\O}}$ may be as large as $T_{\O}^{\star}$, and therefore the weak type $(1,1)$ for $M_{T_{\O}}$ is a difficult open question.

Recently, Conde-Alonso, Culiuc, Di Plinio and Ou \cite{CCPO} obtained another sparse domination principle, not relying on the end-point
weak type estimates of maximal truncations. This principle was effectively applied to rough singular integrals. For example, if $\O\in L^{\infty}$, the following estimate in \cite{CCPO} was proved for all $1<p<\infty$:
\begin{equation}\label{rough}
|\langle T_{\O}f,g \rangle|\le C_np'\|\O\|_{L^{\infty}(S^{n-1})}\sup_{\mathcal S}\sum_{Q\in {\mathcal S}}\langle f\rangle_{p,Q}\langle g\rangle_{1,Q}|Q|,
\end{equation}
where $p'=\frac{p}{p-1}$. This estimate recovers the quantitative weighted bound
\begin{equation}\label{quant}
\|T_{\O}\|_{L^2(w)\to L^2(w)}\le C_n\|\O\|_{L^{\infty}(S^{n-1})}[w]_{A_2}^2,
\end{equation}
obtained earlier by Hyt\"onen, Roncal and Tapiola \cite{HRT}.

The dependencies on $[w]_{A_2}$ in (\ref{quant}) and on $p$ in (\ref{rough}) when $p\to 1$ are closely related.
At this time, we do not know whether the quadratic dependence on $[w]_{A_2}$ in (\ref{quant}) can be improved. By this reason, it is also
unknown whether the dependence on $p$  in (\ref{rough}) is sharp.

In this paper we present a different approach to (\ref{rough}) and (\ref{quant}) based on weak type estimates of suitable maximal operators.
We believe that this approach is of independent interest in the theory of the rough singular integrals.

Given an operator $T$, define the maximal operator $M_{\la, T}$ by
$$M_{\la, T}f(x)=\sup_{Q\ni x}(T(f\chi_{{\mathbb R}^n\setminus 3Q})\chi_Q)^*(\la|Q|)\quad(0<\la<1),$$
where the supremum is taken over all cubes $Q\subset {\mathbb R}^n$ containing the point $x$, and $f^*$ denotes the non-increasing rearrangement of $f$.

Assume that $T$ is of weak type $(1,1)$. Then it is easy to show (just using that $T(f\chi_{{\mathbb R}^n\setminus 3Q})=Tf-T(f\chi_{3Q})$ along with the standard estimates of
the maximal operators) that $M_{\la,T}$ is of weak type $(1,1)$ too, and
$$
\|M_{\la,T}\|_{L^1\to L^{1,\infty}}\le \frac{C_n}{\la}\|T\|_{L^1\to L^{1,\infty}}\quad(0<\la<1).
$$
On the other hand, $M_{\la,T}\uparrow M_T$ as $\la\to 0$. Therefore, the weak type $(1,1)$ of $M_T$ (which, by \cite{L2}, leads to the
best possible sparse domination of $T$) is equivalent to the weak type $(1,1)$ of $M_{\la,T}$ with the
$\|M_{\la,T}\|_{L^1\to L^{1,\infty}}$ norm bounded in $\la$. These observations raise a natural question about
the sharp dependence of $\|M_{\la,T}\|_{L^1\to L^{1,\infty}}$ on $\la$ for a given operator $T$ of weak type $(1,1)$.
More generally, if $T$ is of weak type $(p,p)$, one can ask about the sharp dependence of $\|M_{\la,T}\|_{L^p\to L^{p,\infty}}$ on $\la$.

The main result of this paper is the following estimate for rough homogeneous singular integrals $T_{\O}$.

\begin{theorem}\label{logest} If $\O\in L^{\infty}(S^{n-1})$, then
\begin{equation}\label{logestim}
\|M_{\la, T_{\O}}\|_{L^1\to L^{1,\infty}}\le C_n\|\O\|_{L^{\infty}(S^{n-1})}\Big(1+\log\frac{1}{\la}\Big)\quad(0<\la<1).
\end{equation}
\end{theorem}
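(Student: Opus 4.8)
The plan is to combine a dyadic decomposition of the kernel with the elementary fact that a \emph{single} dyadic piece of the truncated operator, restricted to the cube it is centred on, is pointwise dominated by the Hardy--Littlewood maximal function $M$; this lets one peel off $O(\log\frac1\lambda)$ such pieces at cost $O(\log\frac1\lambda)\,Mf$, leaving a "tail" whose Fourier decay absorbs the loss produced by the rearrangement. Normalise $\|\Omega\|_{L^\infty(S^{n-1})}=1$ (the left-hand side of the asserted inequality is homogeneous of degree one in $\Omega$), write $K(y)=\Omega(y/|y|)|y|^{-n}$, $K_j=K\,\chi_{\{2^j\le|y|<2^{j+1}\}}$, $T_jf=K_j*f$, so $T_\Omega=\sum_{j\in\mathbb Z}T_j$, and recall the two standard facts $\|K_j\|_{L^1}\lesssim_n1$ and, since $\Omega$ has zero average and $\Omega\in L^\infty(S^{n-1})\subset L^2(S^{n-1})$,
\[
|\widehat{K_j}(\xi)|\lesssim_n\min\bigl(2^j|\xi|,(2^j|\xi|)^{-\delta}\bigr),\qquad \delta=\delta_n>0 .
\]
The key elementary bound is: if $\ell(Q)\le 2^{j+1}$ and $x\in Q$, then $\{y:|x-y|<2^{j+1}\}$ lies in a ball $B$ of radius $\lesssim 2^j$ about the centre of $Q$, with $x\in B$, so $|T_j(f\chi_{\mathbb R^n\setminus3Q})(x)|\lesssim_n\langle|f|\rangle_B$; since the right side does not depend on $x$ once $Q,j$ are fixed, one in fact gets $\|T_j(f\chi_{\mathbb R^n\setminus3Q})\chi_Q\|_\infty\lesssim_n Mf(x_0)$ for every $x_0\in Q$.

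Now fix $x_0$, a cube $Q\ni x_0$ with $2^{m}\le\ell(Q)<2^{m+1}$, and set $N=\lceil\frac1\delta\log\frac1\lambda\rceil$. For $j\le m-1$ the piece $T_j(f\chi_{\mathbb R^n\setminus3Q})$ vanishes on $Q$ by support, and for $j\ge m+N$ (once $N$ exceeds a dimensional constant) $f\chi_{3Q}$ no longer contributes, so on $Q$ one has $T_\Omega(f\chi_{\mathbb R^n\setminus3Q})=\sum_{m\le j<m+N}T_j(f\chi_{\mathbb R^n\setminus3Q})+R_{m+N}f$ with $R_k:=\sum_{j\ge k}T_j$. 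The first sum has $\le N$ terms, each of sup norm $\lesssim_n Mf(x_0)$ on $Q$, so its rearrangement at level $\lambda|Q|/2$ is $\lesssim_n N\,Mf(x_0)$. Taking the supremum over $Q\ni x_0$,
\[
M_{\lambda,T_\Omega}f(x_0)\lesssim_n N\,Mf(x_0)+\mathcal Rf(x_0),\qquad
\mathcal Rf(x_0):=\sup_{Q\ni x_0}\bigl(R_{m_Q+N}f\cdot\chi_Q\bigr)^*(\lambda|Q|),\quad 2^{m_Q}\sim\ell(Q).
\]
As $M$ is of weak type $(1,1)$ and $N\lesssim_n1+\log\frac1\lambda$, it remains to prove $\|\mathcal R\|_{L^1\to L^{1,\infty}}\lesssim_n1$.

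For the tail one uses that $|\widehat{R_k}(\xi)|\lesssim_n\min(1,(2^k|\xi|)^{-\delta})$, so $R_{m_Q+N}$ smooths at scale $2^N\ell(Q)$ and, on the frequencies $|\xi|\gtrsim1/\ell(Q)$ that govern the oscillation of $R_{m_Q+N}f$ on $Q$, its symbol is $\lesssim 2^{-N\delta}\le\sqrt\lambda$; combined with $(h\chi_Q)^*(\lambda|Q|)\le(\lambda|Q|)^{-1/2}\|h\|_{L^2(Q)}$, this $2^{-N\delta}$ cancels the $\lambda^{-1/2}$ loss. To run the estimate, do a Calderón--Zygmund decomposition $f=g+b$, $b=\sum_ib_i$, at the height $\alpha$ of the weak-type inequality, with the usual properties ($\operatorname{supp}b_i\subset Q_i$, $\int b_i=0$, $\|b_i\|_1\lesssim\alpha|Q_i|$, $\sum_i|Q_i|\lesssim\|f\|_1/\alpha$, $\|g\|_\infty\lesssim\alpha$, $\|g\|_2^2\lesssim\alpha\|f\|_1$). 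For $g$, split $R_{m_Q+N}g$ into its frequencies below and above $1/\ell(Q)$: the low part is essentially constant on $Q$ and is bounded, uniformly in $\lambda$, by Hardy--Littlewood maximal functions of $g$, while the high part has $L^2(Q)$-norm $\lesssim 2^{-N\delta}$ times a local $L^2$ average of $g$, so its rearrangement at level $\lambda|Q|$ is $\lesssim(M|g|^2)^{1/2}$; hence the $g$-part of $\mathcal R$ satisfies $|\{\cdot>\alpha\}|\lesssim\|f\|_1/\alpha$.

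The bad part is the main obstacle. Using $\int b_i=0$, write $R_{m_Q+N}b_i(x)=\int_{Q_i}b_i(y)\bigl[k(x-y)-k(x-c_{Q_i})\bigr]dy$ with $k=K\chi_{\{|\cdot|\ge2^{m_Q+N}\}}$, and split the difference into a radial part — which gains $\ell(Q_i)/D$, $D$ being the distance from $Q_i$ to $Q$, so that $\int\sum_i\|b_i\|_1\,\ell(Q_i)(D+\ell(Q_i))^{-n-1}\lesssim\|f\|_1$ — and an angular part $\sim D^{-n}\int_{Q_i}b_i(y)[\Omega(\tfrac{x-y}{|x-y|})-\mathrm{const}]\,dy$, which has no pointwise gain but, by the $L^2$ theory of rough homogeneous kernels (Plancherel applied to $\sum_{j\ge m_Q+N}T_jb_i$ through the Fourier decay), satisfies an $L^2$ estimate with a gain $(\ell(Q_i)/D)^{\delta'}$, $\delta'=\delta'_n>0$. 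Since here $D\gtrsim2^N\ell(Q_i)$, this gain is $\lesssim 2^{-N\delta'}\simeq\lambda^{\delta'/\delta}$, and choosing the constant in $N\simeq\tfrac1\delta\log\tfrac1\lambda$ so that it beats the $\lambda^{-1/2}$ from the rearrangement, the series over $i$ converges and is controlled by $\|f\|_1/\alpha$ off $\bigcup_i3Q_i$. The delicate point — essentially a localised, $\lambda$-quantified version of the known weak type $(1,1)$ bound for $T_\Omega$ — is that in the supremum defining $\mathcal Rf$ the outer cube $Q$ can lie at an arbitrary scale relative to the $Q_i$; one must verify that only the scales $2^j\sim D$ with $2^j\gtrsim\max(2^N\ell(Q),\ell(Q_i))$ contribute to $(R_{m_Q+N}b\cdot\chi_Q)^*(\lambda|Q|)$ and sum the $L^2$ estimates over both the bad cubes and the admissible outer cubes without losing the gain.
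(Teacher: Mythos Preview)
Your outline is broadly parallel to the paper's strategy --- peel off $O(\log\tfrac1\lambda)$ dyadic shells by the trivial pointwise bound $|T_j(f\chi_{\mathbb R^n\setminus 3Q})|\lesssim Mf$ on $Q$, then show the resulting maximal tail operator is weak $(1,1)$ via a Calder\'on--Zygmund decomposition --- but the treatment of the bad part has a genuine gap.

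The concrete error is the claim ``since here $D\gtrsim 2^N\ell(Q_i)$''. The support condition on $R_{m_Q+N}$ forces only $D\gtrsim 2^N\ell(Q)$, while the exclusion $x_0\notin\bigcup_i 3Q_i$ forces only $D\gtrsim\ell(Q_i)$. When the bad cube satisfies $\ell(Q_i)\gg\ell(Q)$ --- a regime you cannot exclude, since the outer cube $Q$ in the supremum defining $\mathcal R$ ranges over all scales --- one gets only $\ell(Q_i)/D\lesssim 1$, with no gain whatsoever to pay for the $\lambda^{-1/2}$ coming from the rearrangement.

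More fundamentally, the asserted $L^2$ gain $(\ell(Q_i)/D)^{\delta'}$ for the angular part is not obtainable from ``Plancherel applied through the Fourier decay''. With $2^j\sim D$ and $\ell(Q_i)=2^{j-s}$, the relevant integral
\[
\int_{\mathbb R^n}|\widehat{K_j}(\xi)|^2\min\bigl(1,(2^{j-s}|\xi|)^2\bigr)\,d\xi
\]
diverges in every dimension $n\ge 2$: on the region $|\xi|\gtrsim 2^{s-j}$ the integrand is $\sim(2^j|\xi|)^{-2\delta}$, and $2\delta<n$. This is exactly the obstruction that makes the weak $(1,1)$ bound for rough $T_\Omega$ a hard theorem rather than a consequence of Fourier decay plus cancellation of $b_i$. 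The paper closes this gap by invoking Seeger's microlocal splitting $K_j=\Gamma_j^s+(K_j-\Gamma_j^s)$, in which $\Gamma_j^s$ has angular support in a $2^{-\kappa s}$-cap and admits a genuine $L^2$ estimate $\|\sum_j\Gamma_j^s*B_{j-s}\|_2^2\lesssim 2^{-s(1-\kappa)}\alpha\sum_P\|b_P\|_1$, while the remainder $(K_j-\Gamma_j^s)*b_P$ is handled in $L^1$; this deep input is the essential ingredient your sketch lacks. The paper also requires a separate kernel smoothing $T_\Omega=T_{\Omega_\varepsilon}+T_{\Omega-\Omega_\varepsilon}$ with $\|T_{\Omega-\Omega_\varepsilon}\|_{L^2\to L^2}\lesssim\varepsilon^{1/2}$, so that the crude $L^2$ bound on the good part --- which in your scheme would produce an unrecoverable $\lambda^{-1}$ rather than the ``local $L^2$ average of $g$'' you assert --- can be absorbed after optimising $\varepsilon=\lambda^2$.
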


The proof of Theorem \ref{logest} is given in the next Section. In Section~3, we obtain a sparse domination principle, where the operator
$M_{\la, T}$ plays an important role. In particular, we will show that (\ref{logestim}) implies the sparse bound (\ref{rough}).

Notice that any improvement of the logarithmic dependence in
(\ref{logestim}) would lead to the corresponding improvement of the dependence on $p$ when $p\to 1$ in (\ref{rough}). Therefore, by the reasons discussed above,
we do not know whether the logarithmic dependence in (\ref{logestim}) can be improved.

\section{Proof of Theorem \ref{logest}}
\subsection{An overview of the proof} As we have mentioned before, for smooth singular integrals one can use the trivial
estimate $M_{\la,T}\le M_T$, which yields the $L^1\to L^{1,\infty}$ bound with no dependence on $\la$. This simple idea
suggests to approximate a rough singular integral $T_{\O}$ by smooth ones. Given $0<\e<1$, we decompose
\begin{equation}\label{decomp}
T_{\O}=T_{\O_{\e}}+T_{\O-\O_{\e}},
\end{equation}
where $T_{\O_{\e}}$ is a smooth singular integral to which the standard Calder\'on-Zygmund theory is applicable,
and $\|T_{\O-\O_{\e}}\|_{L^2\to L^2}$ satisfies a good estimate in terms of $\e$ when $\e\to 0$.
Then
$$M_{\la,T_{\O}}\le M_{T_{\O_{\e}}}+M_{\la,T_{\O-\O_{\e}}}.$$

For the smooth part $T_{\O_{\e}}$ we use a very similar analysis to what was done by Hyt\"onen, Roncal and Tapiola \cite{HRT}, namely, we show that the kernel of
$T_{\O_{\e}}$ is Dini-continuous tracking the Dini constant, which implies
$$\|M_{T_{\O_{\e}}}\|_{L^1\to L^{1,\infty}}\le C_n\|\Omega\|_{L^{\infty}(S^{n-1})}\log\frac{2}{\e}\quad(0<\e<1).$$

The non-smooth part $T_{\O-\O_{\e}}$ is more complicated. The only fact that $T_{\O-\O_{\e}}$ is a singular integral with small $L^2$ norm in terms of $\e$ is not enough
in order to obtain a good estimate for $\|M_{\la,T_{\O-\O_{\e}}}\|_{L^1\to L^{1,\infty}}$ in terms of $\e$ and $\la$.
However, we can keep $\O_{\e}$ in (\ref{decomp}) to be homogeneous. This allows to apply to $T_{\O-\O_{\e}}$ the deep machinery developed by Seeger in \cite{S}.
Combining it with several other ingredients, we obtain
$$
\|M_{\la,T_{\O-\O_{\e}}}\|_{L^1\to L^{1,\infty}}\le C_n\|\Omega\|_{L^{\infty}(S^{n-1})}\left(\frac{\e^{1/2}}{\la}+\log\frac{2}{\e}\right).
$$
It remains to optimize the obtained estimates with respect to $\e$, namely, we take $\e=\la^{2}$.

The details follow in next subsections.

\subsection{Main splitting}
Denote $$\O_0(x)=\frac{\Omega(x/|x|)}{|x|^n}\chi_{\{1\le|x|\le 2\}}(x).$$
Let $\f\in C^{\infty}$, $\text{supp}\,\f\subset\{|x|<1\}$ and $\int\f=1$. For $\e>0$, set
$$\O_{\e}(\theta)=\frac{1}{\log 2}\int_0^{\infty}\O_0*\f_{\e}(t\theta)t^{n-1}dt\quad(\theta\in S^{n-1}),$$
where $\f_{\e}(x)=\frac{1}{\e^n}\f(x/\e)$.

We split $T_{\O}$ as follows: $T_{\O}=T_{\O_{\e}}+T_{\O-\O_{\e}}$.

\begin{lemma}\label{L2} For every $0<\a<1$,
$$\|T_{\O-\O_{\e}}\|_{L^2\to L^2}\le C_{\a,n}\|\O\|_{L^{\infty}(S^{n-1})}\e^{\a}\quad(0<\e<1).$$
\end{lemma}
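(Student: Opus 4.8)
The plan is to estimate the difference kernel $K(x) = \frac{\O(x/|x|)}{|x|^n} - \O_\e(x/|x|)/|x|^n$ via its Fourier transform, following the classical Duoandikoetxea--Rubio de Francia method of rotations combined with the Littlewood--Paley decomposition in the radial variable. The key observation is that $T_{\O-\O_\e}$ is a convolution operator whose kernel has zero average on spheres (since both $\O$ and $\O_\e$ have zero average), so one can decompose it dyadically: write $K = \sum_{j\in\mathbb Z} K_j$ where $K_j$ is $K$ restricted to the annulus $\{2^j \le |x| < 2^{j+1}\}$, or rather use the homogeneity to reduce to a single "unit piece" and its dilates. The point of the mollification defining $\O_\e$ is precisely that $\O_\e$ is a smoothed version of $\O$ at scale $\e$, so that the difference $\O_0 * \f_\e - \O_0$ (appropriately normalized) is small in a suitable sense and, crucially, its Fourier transform is well controlled.

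First I would recall the standard estimate: for a homogeneous kernel of the form $\mu(x) = \Omega(x/|x|)/|x|^n$ with $\Omega$ having mean zero, truncated to a dyadic annulus, one has both the trivial $L^1$ bound on the measure and a decay estimate $|\widehat{\mu_j}(\xi)| \lesssim \min(|2^j\xi|, |2^j\xi|^{-\alpha})$ type behavior; the low-frequency bound comes from cancellation, the high-frequency bound from regularity or from a further decomposition. Applying this to the difference kernel and using that $\|\O\|_{L^\infty}$ controls everything, I would get that the multiplier $m_\e(\xi) = \widehat{K}(\xi)$ associated to $T_{\O-\O_\e}$ satisfies $|m_\e(\xi)| \le C_n \|\O\|_{L^\infty}$ uniformly, plus an improvement that becomes effective when $|\xi|$ is not comparable to $1/\e$: for $|\xi| \lesssim 1/\e$ the mollification barely changes things but the cancellation of the mean-zero difference gives a gain, while for $|\xi| \gtrsim 1/\e$ the smoothing of $\O_\e$ kills the high frequencies and one is left essentially with $T_\O$ restricted to high frequencies, which decays. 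Summing the dyadic pieces in $\xi$ and interpolating the two regimes yields the power $\e^\alpha$ for any $\alpha<1$ (the loss of the endpoint $\alpha=1$ being exactly the usual $L^2$-vs-$L^q$ loss in the method of rotations, since $\O\in L^\infty$ but we only extract Hölder-type decay).

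More concretely, the cleanest route is: by Plancherel, $\|T_{\O-\O_\e}\|_{L^2\to L^2} = \|m_\e\|_{L^\infty}$, so it suffices to bound $|m_\e(\xi)|$ pointwise by $C_{\alpha,n}\|\O\|_{L^\infty}\e^\alpha$. Writing $m_\e(\xi) = \widehat{\O_0}(\xi)\sum_j \text{(dilate factors)} - \text{(same with }\O_0*\f_\e)$ via the logarithmic averaging in the definition, the difference of the two spherical averages is governed by $\widehat{\O_0 * \f_\e} - \widehat{\O_0} = (\widehat{\f_\e} - 1)\widehat{\O_0}$. Since $|\widehat{\f_\e}(\xi) - 1| \le \min(C|\e\xi|, 2)$, I would split: when $|\e\xi|\le 1$ use the factor $|\e\xi| \le (\e|\xi|)^\alpha \cdot (\text{bounded})$ together with the standard bound $|\widehat{\O_0}(\xi)| \lesssim \|\O\|_\infty \min(|\xi|,|\xi|^{-\text{something}})$; when $|\e\xi| > 1$ use the crude bound $2$ on $|\widehat\f_\e - 1|$ together with the decay $|\widehat{\O_0}(\xi)| \lesssim \|\O\|_\infty |\xi|^{-\alpha}\lesssim \|\O\|_\infty \e^\alpha$. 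In both regimes one collects a factor $\e^\alpha$. The main obstacle I anticipate is getting the decay estimate $|\widehat{\O_0}(\xi)|\lesssim \|\O\|_{L^\infty}|\xi|^{-\alpha}$ for the truncated homogeneous piece with only an $L^\infty$ (not smooth) angular part — this requires the Duoandikoetxea--Rubio de Francia trick of interpolating between an $L^1$-type bound with logarithmic loss (van der Corput / stationary phase on the sphere giving $|\xi|^{-1/2}\log|\xi|$ or so, after absorbing the angular roughness) and the trivial bound, and then carefully summing the resulting dyadic annular contributions in a way that converges; the bookkeeping of constants $C_{\alpha,n}$ blowing up as $\alpha\to 1$ is where the restriction $\alpha<1$ genuinely enters, and I would need to be careful that the sum over dyadic scales of $K$ actually converges (it does, geometrically, once one has any positive power of decay at both ends).
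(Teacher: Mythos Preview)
Your approach is correct and is essentially identical to the paper's: reduce via Plancherel to a pointwise multiplier bound, express the kernel as a radial integral of dilates of $\O_0-\O_0*\f_\e$, factor the Fourier transform as $\widehat{\O_0}\cdot(\widehat{\f_\e}-1)$, and combine the Duoandikoetxea--Rubio de Francia estimate $|\widehat{\O_0}(\xi)|\le C_{\alpha,n}\|\O\|_{L^\infty}\min(|\xi|,|\xi|^{-\alpha})$ with $|\widehat{\f}(\xi)-1|\le C\min(|\xi|,1)$. The only minor imprecision is that your case split ``$|\e\xi|\le 1$ vs.\ $|\e\xi|>1$'' should really be made in the radially scaled variable (i.e., in the integration variable $t$, or the dyadic index $j$), since the full multiplier is homogeneous of degree zero and hence independent of $|\xi|$; once you actually write out the integral $\int_0^\infty|\widehat{\O_0}(\xi/t)||\widehat{\f}(\e\xi/t)-1|\,\frac{dt}{t}$ this is forced on you and the computation concludes exactly as you describe.
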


\begin{proof} Observe that the kernel of $T_{\O-\O_{\e}}$ is given by
$$\Psi_{\e}(x)=\frac{(\O-\O_{\e})(x/|x|)}{|x|^n}=\frac{1}{\log 2}\int_0^{\infty}(\O_0-\O_0*\f_{\e})(tx)t^{n-1}dt.$$
Hence, by Plancherel's theorem, it suffices to show that
\begin{equation}\label{sufs}
\|\widehat \Psi_{\e}\|_{L^{\infty}}\le C_{\a,n}\|\O\|_{L^{\infty}(S^{n-1})}\e^{\a},
\end{equation}
where the Fourier transform is taken in the appropriate principal value sense.

We will use the following well known estimate (see \cite{DR}):
$$|\widehat \O_0(\xi)|\le C_{\a,n}\|\O\|_{L^{\infty}(S^{n-1})}\min(|\xi|,|\xi|^{-\a})\quad(0<\a<1).$$
Also, since $\int\f=1$, we have
$$|\widehat\f(\xi)-1|\le C\min(|\xi|,1),$$
with some absolute $C>0$. Combining these estimates yields
\begin{eqnarray*}
&&|\widehat \Psi_{\e}(\xi)|\le \frac{1}{\log 2}\int_0^{\infty}|\widehat \O_0(\xi/t)||\widehat \f(\e\xi/t)-1|\frac{dt}{t}\\
&&\le C\|\Omega\|_{L^{\infty}}\int_0^{\infty}\min(|\xi/t|,|\xi/t|^{-\a})\min(|\e\xi/t|,1)\frac{dt}{t}\\
&&=C\|\Omega\|_{L^{\infty}}\int_0^{\infty}\min(1/t,t^{\a})\min(\e/t,1)\frac{dt}{t}\\
&&\le C\|\Omega\|_{L^{\infty}}\Big(\int_0^1\min(\e/t,1)\frac{dt}{t^{1-\a}}+\e\Big)\le
C\|\Omega\|_{L^{\infty}}\e^{\a},
\end{eqnarray*}
which proves (\ref{sufs}).
\end{proof}

\subsection{Calder\'on-Zygmund theory of $T_{\O_{\e}}$}
Let $Tf=\text{p.v.}f*K$ be $L^2$ bounded with $K$ satisfying $|K(x)|\le \frac{C_K}{|x|^n}$ and
\begin{equation}\label{smcond}
|K(x-y)-K(x)|\le \o(|y|/|x|)\frac{1}{|x|^n}\quad(|y|<|x|/2),
\end{equation}
where
$$[\o]_{\text{Dini}}=\int_0^1\o(t)\frac{dt}{t}<\infty.$$

It was proved in \cite[Lemma 3.2]{L2} that
$$M_Tf(x)\le C_n([\omega]_{\text{\rm{Dini}}}+C_K)Mf(x)+T^{\star}f(x),$$
where $M$ is the Hardy-Littlewood maximal operator, and $T^{\star}$ is the maximal singular integral.
The classical proof (see, e.g., \cite[Ch. 4.3]{G}) shows that
$$
\|T^{\star}\|_{L^1\to L^{1,\infty}}\le C_n(\|T\|_{L^2\to L^2}+C_K+[\omega]_{\text{\rm{Dini}}}).
$$
This, along with the previous estimate, implies
\begin{equation}\label{mt}
\|M_T\|_{L^1\to L^{1,\infty}}\le C_n(\|T\|_{L^2\to L^2}+C_K+[\omega]_{\text{\rm{Dini}}}).
\end{equation}

\begin{lemma}\label{czprop} The operator $T_{\O_{\e}}$ satisfies
$$\|M_{T_{\O_{\e}}}\|_{L^1\to L^{1,\infty}}\le C_n\|\Omega\|_{L^{\infty}(S^{n-1})}\log\frac{2}{\e}\quad(0<\e<1).$$
\end{lemma}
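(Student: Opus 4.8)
The plan is to apply the general estimate (\ref{mt}) to the operator $T=T_{\O_\e}$, so the task reduces to bounding three quantities uniformly (up to the asserted logarithmic factor): the $L^2\to L^2$ norm of $T_{\O_\e}$, the size constant $C_K$ of its kernel, and the Dini constant $[\o]_{\text{Dini}}$ of its modulus of continuity. Since $\O_\e$ is a bounded, zero-average function on $S^{n-1}$ arising from mollifying $\O_0$, the $L^2$ boundedness of $T_{\O_\e}$ is classical and $\|T_{\O_\e}\|_{L^2\to L^2}\le C_n\|\O_\e\|_{L^q(S^{n-1})}\le C_n\|\O\|_{L^\infty(S^{n-1})}$ for any fixed $q>1$; one should also check $\O_\e$ has zero average, which follows from the zero-average property of $\O$ together with the definition. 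The kernel bound $C_K\le C_n\|\O\|_{L^\infty}$ is immediate from $\|\O_\e\|_{L^\infty(S^{n-1})}\le C\|\O\|_{L^\infty(S^{n-1})}$, which in turn follows since $\f_\e$ is an $L^1$-normalized mollifier and the defining integral in $t$ is over the effective range $t\sim 1$.

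The heart of the matter is therefore the Dini estimate: I would show that the kernel $K_\e(x)=\O_\e(x/|x|)/|x|^n$ satisfies (\ref{smcond}) with a modulus $\o$ for which $[\o]_{\text{Dini}}\le C_n\|\O\|_{L^\infty(S^{n-1})}\log\frac{2}{\e}$. The natural approach, following Hyt\"onen–Roncal–Tapiola, is to split the modulus of continuity of $K_\e$ into two regimes according to whether the increment $|y|/|x|$ is larger or smaller than $\e$. For $|y|/|x|\gtrsim\e$ one uses the homogeneity and the crude Lipschitz bound on the sphere coming from differentiating the mollification—$\|\nabla_\theta\O_\e\|_{L^\infty}\lesssim \e^{-1}\|\O\|_{L^\infty}$—which contributes a modulus behaving like $\min(|y|/(\e|x|),1)$; for $|y|/|x|\lesssim\e$ one must work harder, exploiting cancellation in the difference $\O_0-$ (its translate) integrated against the smooth profile, to get a modulus that is bounded (by $C_n\|\O\|_{L^\infty}$) uniformly. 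Integrating $\o(t)/t$ over $(0,1)$: the contribution of the region $t<\e$ is $O(\|\O\|_{L^\infty})$, while the region $\e<t<1$ of the first piece gives $\int_\e^1 (1)\,\frac{dt}{t}=\log\frac1\e$, which is the source of the logarithm. I would be careful to track that the two pieces genuinely combine into a single admissible modulus $\o$ and that $\o$ is nondecreasing (replacing it by its least nondecreasing majorant if necessary, which does not affect the Dini integral bound).

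The main obstacle I anticipate is the estimate in the small-increment regime $|y|<\e|x|$, where one cannot afford the factor $\e^{-1}$ from naive differentiation and must instead extract the smoothing built into $\f_\e$ at the right scale. Concretely, writing $\O_0-\O_0*\f_\e$ is not available here since we need $\O_\e$ itself, not $\O-\O_\e$; instead I would differentiate the defining integral $\O_\e(\theta)=\frac{1}{\log 2}\int_0^\infty (\O_0*\f_\e)(t\theta)t^{n-1}\,dt$ and use that $\nabla(\O_0*\f_\e)=\O_0*\nabla\f_\e$, so that increments of $\O_\e$ along the sphere are controlled by $\|\O_0\|_{L^1}\|\nabla\f_\e\|_{L^1}\lesssim \e^{-1}\|\O\|_{L^\infty}$ times the angular displacement—giving exactly the $\min(|y|/(\e|x|),1)$ behavior. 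The subtlety is ensuring that near the origin and near $|x|=2$ the truncation $\chi_{\{1\le|x|\le 2\}}$ in $\O_0$ does not spoil the estimate; here the $t$-integration smooths over the radial variable, and one checks the boundary contributions are harmless because the mollifier has width $\e<1$. Once the Dini bound $[\o]_{\text{Dini}}\lesssim \|\O\|_{L^\infty}\log\frac2\e$ is in hand, plugging $\|T_{\O_\e}\|_{L^2\to L^2}$, $C_K$, and $[\o]_{\text{Dini}}$ into (\ref{mt}) yields the claim immediately.
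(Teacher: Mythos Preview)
Your proposal is correct and matches the paper's approach: apply (\ref{mt}) after checking $\|T_{\O_\e}\|_{L^2\to L^2},\,C_{K_\e}\le C_n\|\O\|_{L^\infty}$ and establishing the modulus $\o(t)=C_n\|\O\|_{L^\infty}\min(1,t/\e)$, whose Dini integral is $C_n\|\O\|_{L^\infty}\log(2/\e)$. The ``main obstacle'' you anticipate in the regime $|y|<\e|x|$ does not exist: the paper simply differentiates the integral representation of $K_\e$ in ${\mathbb R}^n$ to get $|\nabla K_\e(x)|\le C_n\e^{-1}\|\O\|_{L^\infty}|x|^{-n-1}$, which via the mean value theorem gives $\o(t)\le C_n\|\O\|_{L^\infty}\,t/\e$ for \emph{all} $t$ (in particular $\o(t)\le C_n\|\O\|_{L^\infty}$ when $t<\e$), and the trivial size bound supplies the cap at $1$ for $t>\e$ --- no cancellation argument is needed.
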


\begin{proof} Observe that for $0<\e<1$, $\text{supp}\,\O_0*\f_{\e}\subset\{|x|\le 3\}$. Also,
$$\|\O_0*\f_{\e}\|_{L^{\infty}}\le \|\O_0\|_{L^{\infty}}\|\f_{\e}\|_{L^1}\le C\|\O\|_{L^{\infty}}.$$
Therefore, setting $K_{\e}(x)=\frac{\O_{\e}(x/|x|)}{|x|^n}$, we obtain
\begin{equation}\label{linf}
C_{K_{\e}}=\|\O_{\e}\|_{L^{\infty}}\le \frac{1}{\log 2}\|\O_0*\f_{\e}\|_{L^{\infty}}\int_0^3t^{n-1}dt\le C_n\|\O\|_{L^{\infty}}.
\end{equation}
This, along with the standard $L^2$ bound (see \cite{CZ}), implies
\begin{equation}\label{L2bound}
\|T_{\O_{\e}}\|_{L^2\to L^2}\le  C_n\|\O_{\e}\|_{L^{\infty}}\le C_n\|\O\|_{L^{\infty}}.
\end{equation}

Further, using that
$$K_{\e}(x)=\frac{1}{\log 2}\int_0^{\infty}\O_0*\f_{\e}(tx)t^{n-1}dt,$$
we obtain
\begin{eqnarray*}
|\nabla K_{\e}(x)|&=&\frac{1}{\e}\frac{1}{\log 2}\int_0^{\infty}\O_0*(\nabla\f)_{\e}(tx)t^{n}dt\\
&=&\frac{1}{\e}\frac{1}{|x|^{n+1}}\frac{1}{\log 2}\int_0^{\infty}\O_0*(\nabla\f)_{\e}(tx/|x|)t^{n}dt.
\end{eqnarray*}
From this and from the same argument as used in the proof of (\ref{linf}),
$$|\nabla K_{\e}(x)|\le \frac{C_n}{\e}\|\O\|_{L^{\infty}}\frac{1}{|x|^{n+1}}.$$

Therefore, by the mean value theorem,
$$|K_{\e}(x-y)-K_{\e}(x)|\le \frac{C_n}{\e}\|\O\|_{L^{\infty}}\frac{|y|}{|x|}\frac{1}{|x|^n}\quad(|y|<|x|/2).$$
Also, by (\ref{linf}),
$$|K_{\e}(x-y)-K_{\e}(x)|\le C_n\|\O\|_{L^{\infty}}\frac{1}{|x|^n}\quad(|y|<|x|/2).$$
Hence, $K_{\e}$ satisfies (\ref{smcond}) with
$$\o(t)=C_n\|\O\|_{L^{\infty}}\min(1,t/\e),$$
which implies
$$[\o]_{\text{\rm{Dini}}}\le C_n\|\O\|_{L^{\infty}}\log\frac{2}{\e}.$$
This, along with (\ref{linf}), (\ref{L2bound}) and (\ref{mt}), completes the proof.
\end{proof}

\subsection{The key estimate} In order to handle the rough part $T_{\O-\O_{\e}}$, we will prove the following lemma which can be stated for
a general rough homogeneous singular integral $T_{\O}$ with $\O\in L^{\infty}(S^{n-1})$.

\begin{lemma}\label{keying}  There exists $C_n>0$ such that
for every $0<\d\le 1$,
$$\|M_{\la,T_{\O}}\|_{L^1\to L^{1,\infty}}\le C_n\Big(\frac{\d}{\la}+\log\frac{2}{\d}\Big)\max\Big(\|\O\|_{L^{\infty}(S^{n-1})},\frac{\|T_{\O}\|_{L^2\to L^2}}{\d}\Big).$$
\end{lemma}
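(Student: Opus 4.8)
The natural strategy is to run a Calder\'on--Zygmund decomposition of $f$ at height $\alpha$ and then estimate $M_{\la,T_{\O}}(f\chi_{\{M_{\la,T_{\O}}f>\alpha\}})$ via the usual splitting $f=g+b$ into good and bad parts, where $g$ is the bounded part and $b=\sum_j b_j$ is supported on the Whitney cubes $Q_j$ with $\int b_j=0$. For the good part one uses the $L^2$ bound: since $M_{\la,T_{\O}}$ is controlled in $L^2$ by $MM(T_{\O}f)$ up to the rearrangement truncation, Chebyshev gives a contribution $\lesssim \frac{\|T_{\O}\|_{L^2\to L^2}^2}{\alpha^2}\|g\|_2^2 \lesssim \frac{\|T_{\O}\|_{L^2\to L^2}^2}{\alpha}\|f\|_1$, and dividing by $\d^2$ accounts for the $\|T_\O\|/\d$ appearing inside the max. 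The heart of the matter is the bad part, and here the point of the truncated rearrangement $(\cdot)^*(\la|Q|)$ becomes essential: for a cube $Q\ni x$, the function $T_{\O}(b\chi_{{\mathbb R}^n\setminus 3Q})\chi_Q$ only needs to be estimated off a set of measure $\la|Q|$, so we may \emph{discard} the bad cubes $Q_j$ that are too large relative to $Q$ or too close to $Q$, absorbing their total measure into the allowed $\la|Q|$ exceptional set.

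First I would reduce, by the $L^1$ normalization $\|f\|_1=1$ and homogeneity, to bounding $|\{M_{\la,T_{\O}}f > \alpha\}|$ by $\frac{C_n}{\alpha}\big(\frac{\d}{\la}+\log\frac2\d\big)\max(\|\O\|_\infty, \|T_\O\|_{2\to2}/\d)$. Perform the CZ decomposition at level $c\,\alpha$. The good part is handled as above. For the bad part, fix a competing cube $Q$ realizing (nearly) the supremum at a point $x$ in the level set. Split the bad cubes $Q_j$ into three families: (i) those with $\ell(Q_j)\ge \ell(Q)$ or $Q_j$ not well-separated from $3Q$ --- there are few of these and their union, intersected suitably, has measure $\lesssim \la|Q|$ once we choose the separation threshold in terms of $\la$ and $\d$, so their contribution to $T_{\O}(b\chi_{{\mathbb R}^n\setminus 3Q})\chi_Q$ can be dumped into the exceptional $\la|Q|$ set; (ii) those $Q_j$ with $3Q_j\subset {\mathbb R}^n\setminus 3Q$ and $\ell(Q_j)$ small, on which one exploits the cancellation $\int b_j=0$ against the \emph{smoothed} kernel --- but $K_\O$ is not smooth, so instead we invoke Lemma~\ref{L2} / the $\O=\O_\e+(\O-\O_\e)$ idea \emph{internally}: write $T_\O = T_{\O_\d} + T_{\O-\O_\d}$ with $\d$ the parameter of the Lemma, use the Dini bound on $K_{\O_\d}$ (Lemma~\ref{czprop}, which already gives $\log\frac2\d$) for the smooth piece, and for the rough remainder $T_{\O-\O_\d}$ use only its $L^2$ norm $\lesssim \|\O\|_\infty\,\d^\a$ together with an $L^2$-based argument on each annulus; (iii) the standard Hörmander-type tail where $y$ ranges over $Q_j$ and $x$ over $Q$ with comparable distances.

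The main obstacle, and the place where the factor $\d/\la$ genuinely arises, is family (ii)/(iii) combined: for rough kernels there is no pointwise kernel smoothness to run the classical $\int_{|x|>2\ell(Q_j)}|K(x-c_j)-K(x)|\,dx\lesssim 1$ estimate, so one cannot simply sum $\sum_j \|b_j\|_1$. Instead I expect to need an $L^2$ argument: bound $\|\sum_{j}T_{\O-\O_\d}b_j\|_{L^2(\text{annular region})}$ using almost-orthogonality coming from the separation of the $Q_j$ and the $\d^\a$-smallness of $\|T_{\O-\O_\d}\|_{2\to2}$ --- essentially Seeger's decomposition is what one would want, but at this lemma's level of generality one uses the cruder substitute of the $L^2$ bound on each dyadic annulus $\{2^k\le|\cdot-c_Q|<2^{k+1}\}$, pays a factor equal to the number of relevant scales $\sim\log(1/\la)$ or is forced to control the overlap by $\la|Q|$, and optimizes. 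Balancing the $L^2$ tail contribution (size $\sim \|T_\O\|_{2\to2}^2/(\d^2\alpha)$ from the good part, and $\sim \d/\la$-type losses from the truncation in the bad part) against the Dini loss $\log\frac2\d$ from the smooth piece yields the claimed $\frac\d\la+\log\frac2\d$. I would finish by combining the good-part estimate, the three bad-part families, and Lemma~\ref{czprop}, choosing the CZ constants and the separation threshold explicitly in terms of $\d$ and $\la$; the bookkeeping of which geometric configurations of $(Q,Q_j)$ fall into the ``discard into $\la|Q|$'' bin versus the ``estimate honestly'' bin is where the care is required.
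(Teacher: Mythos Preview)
Your proposal has a real gap in the handling of the bad part, and the paper itself anticipates exactly this difficulty.

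First, a structural issue: you propose to invoke the splitting $\O=\O_\d+(\O-\O_\d)$ and Lemmata~\ref{L2}--\ref{czprop} \emph{inside} the proof of Lemma~\ref{keying}. But Lemma~\ref{keying} is stated for a general $\O\in L^{\infty}$ and is supposed to produce the factor $\max(\|\O\|_\infty,\|T_\O\|_{L^2\to L^2}/\d)$, allowing $\|T_\O\|_{L^2\to L^2}$ to be genuinely small relative to $\|\O\|_\infty$. Your internal splitting only yields $\|T_{\O-\O_\d}\|_{L^2\to L^2}\lesssim\|\O\|_\infty\d^\a$, i.e.\ a bound purely in terms of $\|\O\|_\infty$; this does not give the lemma as stated. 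In the paper's architecture the smoothing is applied \emph{outside}, and Lemma~\ref{keying} is then applied to the remainder $\O-\O_\e$, where the small $L^2$ norm is the input, not something to be manufactured.

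Second, and more seriously, the substantive step you flag---``for the rough remainder $T_{\O-\O_\d}$ use only its $L^2$ norm \ldots\ together with an $L^2$-based argument on each annulus''---does not go through. You yourself note that ``essentially Seeger's decomposition is what one would want,'' and indeed it is what the paper uses. The paper explicitly warns in the overview that the $L^2$ smallness alone ``is not enough in order to obtain a good estimate for $\|M_{\la,T_{\O-\O_\e}}\|_{L^1\to L^{1,\infty}}$.'' The actual proof decomposes $K=\sum_jK_j$ into dyadic kernel pieces, writes $b=\sum_lB_l$ by bad-cube scale, and sorts $K_j*B_{j-s}$ by the gap parameter $s$. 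The pieces with $s<1$ are supported in $\bigcup_P 9P$ and are discarded via the exceptional set $E^*$ (this is where the $1/\la$ enters); the range $1\le s<m$ is handled by the elementary $L^1$ bound $\|M_{T_j}\|_{L^1\to L^1}\le C_n\|\O\|_\infty$, contributing the factor $m\sim\log(1/\d)$; and the tail $s\ge m$ requires Seeger's Lemma~\ref{seeger}, which supplies genuine exponential decay $2^{-cs}$ via a further kernel splitting $K_j=\Gamma_j^s+(K_j-\Gamma_j^s)$ with microlocal $L^2$ almost-orthogonality. A crude annular $L^2$ bound does not produce this decay, and without it the sum over $s\ge m$ diverges. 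The optimization over $A$ and $m$ at the end (setting $A=2^{m/4}/\nu$ with $\nu=\max(\|\O\|_\infty,\|T_\O\|_{L^2\to L^2}/\d)$ and $2^m\sim\d^{-4}$) is where the stated form $(\d/\la+\log(2/\d))\nu$ finally emerges.
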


Before proving Lemma \ref{keying}, let us show how to complete the proof of Theorem \ref{logest}.

By (\ref{linf}),
$$\|\O-\O_{\e}\|_{L^{\infty}}\le C_n\|\O\|_{L^{\infty}}.$$
This, combined with Lemmata \ref{L2} (where we take $\a=1/2$) and \ref{keying}, implies
$$\|M_{\la,T_{\O-\O_{\e}}}\|_{L^1\to L^{1,\infty}}\le C_n\|\O\|_{L^{\infty}}
\Big(\frac{\d}{\la}+\log\frac{2}{\d}\Big)\max(1,\e^{1/2}/\d).$$
Taking here $\d=\e^{1/2}$, we obtain
\begin{equation}\label{rp}
\|M_{\la,T_{\O-\O_{\e}}}\|_{L^1\to L^{1,\infty}}\le C_n\|\O\|_{L^{\infty}}
\Big(\frac{\e^{1/2}}{\la}+\log\frac{2}{\e}\Big).
\end{equation}

Since
$$M_{\la, T_{\O}}f(x)\le M_{T_{\O_{\e}}}(x)+M_{\la,T_{\O-\O_{\e}}}f(x),$$
by Lemma \ref{czprop} cobmined with (\ref{rp}),
\begin{eqnarray*}
\|M_{\la,T_{\O}}\|_{L^1\to L^{1,\infty}}&\le& 2(\|M_{T_{\O_{\e}}}\|_{L^1\to L^{1,\infty}}+\|M_{\la,T_{\O-\O_{\e}}}\|_{L^1\to L^{1,\infty}})\\
&\le& C_n\|\O\|_{L^{\infty}}
\Big(\frac{\e^{1/2}}{\la}+\log\frac{2}{\e}\Big).
\end{eqnarray*}
Finally, we take here $\e=\la^{2}$, and this completes the proof of Theorem~\ref{logest}.

We turn now to the proof of Lemma \ref{keying}.

\subsection{A reduction to dyadic case} It will be convenient to work with a dyadic version of $M_{\la, T_{\O}}$. We first state several preliminary facts
about dyadic lattices.

Given a cube $Q_0\subset {\mathbb R}^n$, let ${\mathcal D}(Q_0)$ denote the set of all dyadic cubes with respect to $Q_0$, that is, the cubes
obtained by repeated subdivision of $Q_0$ and each of its descendants into $2^n$ congruent subcubes.

A dyadic lattice ${\mathscr D}$ in ${\mathbb R}^n$ is any collection of cubes such that
\begin{enumerate}
\renewcommand{\labelenumi}{(\roman{enumi})}
\item
if $Q\in{\mathscr D}$, then each child of $Q$ is in ${\mathscr D}$ as well;
\item
every 2 cubes $Q',Q''\in {\mathscr D}$ have a common ancestor, i.e., there exists $Q\in{\mathscr D}$ such that $Q',Q''\in {\mathcal D}(Q)$;
\item
for every compact set $K\subset {\mathbb R}^n$, there exists a cube $Q\in {\mathscr D}$ containing $K$.
\end{enumerate}

For this definition, as well as for the next Theorem, we refer to \cite{LN}.

\begin{theorem}\label{three}{\rm{(The Three Lattice Theorem)}}
For every dyadic lattice ${\mathscr D}$, there exist $3^n$ dyadic lattices ${\mathscr D}^{(1)},\dots,{\mathscr D}^{(3^n)}$ such that
$$\{3Q: Q\in{\mathscr D}\}=\cup_{j=1}^{3^n}{\mathscr D}^{(j)}$$
and for every cube $Q\in {\mathscr D}$ and $j=1,\dots,3^n$, there exists a unique cube $R\in {\mathscr D}^{(j)}$ of
sidelength $\ell_{R}=3\ell_Q$ containing $Q$.
\end{theorem}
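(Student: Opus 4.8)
The statement is a purely combinatorial fact about the grid structure underlying a dyadic lattice, so the plan is to first make that structure explicit and then reduce everything to a one-dimensional count. Using property (iii) one fixes an increasing tower $Q_0\subset Q_1\subset\cdots$ of cubes of $\mathscr D$ with $\bigcup_i Q_i=\mathbb R^n$ (such a tower exists by (ii) and (iii), since all cubes of $\mathscr D$ containing a fixed $Q_0$ are totally ordered by inclusion); then $\mathscr D=\bigcup_i\mathcal D(Q_i)$, every $Q\in\mathscr D$ acquires a well-defined generation $k=\mathrm{gen}(Q)\in\mathbb Z$ with $\ell_Q=2^{-k}L$ for a fixed $L>0$, the cubes of a fixed generation $k$ tile $\mathbb R^n$ by a shifted axis-parallel grid, and the generation-$(k{+}1)$ grid refines the generation-$k$ grid. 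In particular, for each coordinate $i$ the projections $\pi_i(Q)$, $Q\in\mathscr D$, form a one-dimensional dyadic lattice $\mathscr D_i$, the generation-$k$ slice of $\mathscr D$ is the product of the generation-$k$ slices of $\mathscr D_1,\dots,\mathscr D_n$, and the dilation $Q\mapsto 3Q$ factors over the coordinates. Hence it suffices to prove the theorem for $n=1$ and then output, for each $\vec\jmath\in\{1,2,3\}^n$ (a relabeling of $\{1,\dots,3^n\}$), the lattice $\mathscr D^{(\vec\jmath)}$ whose generation-$k$ slice is the product of the chosen generation-$k$ one-dimensional slices in the individual coordinates; a product of $n$ one-dimensional dyadic lattices with synchronized generations is again a dyadic lattice, and both the covering identity $\{3Q:Q\in\mathscr D\}=\bigcup_j\mathscr D^{(j)}$ and the uniqueness clause pass through products coordinatewise.

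For $n=1$ the elements of $\{3I:I\in\mathscr D\}$ of length $3\cdot 2^{-k}L$ are precisely the intervals $3I$ with $\mathrm{gen}(I)=k$, and each such $3I$ is tiled by the three generation-$k$ intervals centered at $I$, namely $I$ together with its two neighbours. Label each $I\in\mathscr D$ by its position $p(I)\in\mathbb Z$ in the generation-$\mathrm{gen}(I)$ grid; then the three generation-$k$ intervals $I'$ with $I\subset 3I'$ are those with $p(I')\in\{p(I)-1,p(I),p(I)+1\}$, so they carry the three distinct residues modulo $3$, which is exactly the uniqueness clause once the three lattices are in place. For the lattices themselves I would verify two elementary facts: (a) for fixed $k$ and each residue $\rho\in\mathbb Z/3$, the intervals $\{3I:\mathrm{gen}(I)=k,\ p(I)\equiv\rho\pmod 3\}$ partition $\mathbb R$; and (b) for a generation-$k$ interval $I$ there are exactly four generation-$(k{+}1)$ intervals $I'$ with $3I'\subset 3I$, of which exactly two satisfy a fixed affine congruence $p(I')\equiv 2\,p(I)+c_k\pmod 3$ --- where $c_k\in\mathbb Z$ is determined only by the grid offset of $\mathscr D$ between generations $k$ and $k{+}1$, so that $c_k=-1$ for the standard dyadic lattice --- and these two are exactly the pair whose dilates tile $3I$. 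Since $\rho\mapsto 2\rho+c_k$ is a bijection of $\mathbb Z/3$, the residue classes are merely permuted from one generation to the next, so $\{3I:I\in\mathscr D\}$ splits into three families by following a residue class through the generations; properties (i)--(iii) for each family follow from (a), (b) and the tower construction, and at every generation the three families exhaust the three residues, which gives the covering identity. (In the standard case $\rho\mapsto 2\rho-1$ fixes $1$ and interchanges $0$ and $2$, so one family keeps residue $1$ at all generations while the other two have residue alternating between $0$ and $2$ according to the parity of the generation.)

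The step requiring genuine care is (b). The naive guess that the children of $3I$ inside a family are the dilates of the children of $I$ is false: those dilates do lie inside $3I$, but they overlap one another and do not cover $3I$. Instead the dilated children of $3I$ have the form $3I'$ with $I'$ a child of a \emph{neighbour} of $I$, and the content of (b) is to determine which neighbours --- that is, to identify the recursion on positions as the affine map $\rho\mapsto 2\rho+c_k$ modulo $3$ and to recognize that it is this recursion which forces two of the three one-dimensional output lattices to carry a generation-dependent grid offset. Once the grid description of $\mathscr D$ from the first step is in hand, everything else is routine verification.
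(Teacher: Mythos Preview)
The paper does not give its own proof of this statement: it is quoted verbatim from Lerner--Nazarov \cite{LN} and used as a black box. So there is nothing in the paper to compare your argument against.

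That said, your sketch is correct and is essentially the argument one finds in \cite{LN}. The two structural steps---building an exhausting tower $Q_0\subset Q_1\subset\cdots$ to realise $\mathscr D$ as a product of one-dimensional dyadic lattices with synchronised generations, and then reducing to $n=1$---are exactly right, and both the covering identity and the uniqueness clause do pass through products coordinatewise. In dimension one your residue-class bookkeeping is also correct: the generation-$k$ triples $3I$ with $p(I)\equiv\rho\pmod 3$ partition $\mathbb R$; inside each such $3I$ there are four generation-$(k{+}1)$ triples $3I'$, of which the two with $p(I')\equiv 2\rho+c_k\pmod 3$ are precisely the pair that tile $3I$; and since $\rho\mapsto 2\rho+c_k$ is a bijection of $\mathbb Z/3\mathbb Z$, following a residue class through the generations (in both directions) yields a genuine dyadic lattice. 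Your remark that two of the three one-dimensional output lattices carry a generation-dependent offset is accurate and is sometimes a source of confusion, so it is good that you flag it.

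If you were writing this up formally, the only place that needs a line more of justification is the upward direction: each $3I'$ at generation $k$ lies in exactly two triples $3J$ at generation $k{-}1$, with $p(J)$ differing by $1$, hence with distinct residues, and the one with the residue prescribed by the inverse map $\rho\mapsto 2(\rho-c_{k-1})$ is the one in which $3I'$ sits as a tiling child. This is implicit in your (b) but deserves to be stated, since property~(iii) of a dyadic lattice requires arbitrarily large cubes and hence the recursion must be run upward as well as downward.
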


Turn now to the definition of $M_{\la,T_{\O}}$.
Fix a dyadic lattice ${\mathscr D}$. Let $Q$ be an arbitrary cube containing the point $x$. There exists a cube $R\in {\mathscr D}$
containing the center of $Q$ and such that $\ell_Q/2<\ell_R\le \ell_Q$ (by $\ell_Q$ we denote the sidelength of $Q$).
Then $Q\subset 3R$, and hence $3Q\subset 9R$.
For every $\xi\in Q$,
$$|T_{\O}(f\chi_{9R\setminus 3Q})(\xi)|\le C_n\|\O\|_{L^{\infty}}\frac{1}{|9R|}\int_{9R}|f|\le C_n\|\O\|_{L^{\infty}}Mf(x).$$
Hence,
\begin{eqnarray}
&&\big(T_{\Omega}(f\chi_{{\mathbb R}^n\setminus 3Q})\chi_Q\big)^*\big(\la|Q|\big)\label{a1}\\
&&\le \big(T_{\Omega}(f\chi_{{\mathbb R}^n\setminus 9R})\chi_{3R}\big)^*\big(\la|R|\big)+C_n\|\O\|_{L^{\infty}}Mf(x).\nonumber
\end{eqnarray}

By Theorem \ref{three}, there exists a dyadic
lattice ${\mathscr D}^{(j)}, j=1,\dots 3^n$ such that $3R\in {\mathscr D}^{(j)}$.
Applying Theorem \ref{three} again, we obtain that there are dyadic lattices ${\mathscr D}^{(j,i)}$ such that
$$\{3Q: Q\in{{\mathscr D}^{(j)}}\}=\cup_{i=1}^{3^n}{\mathscr D}^{(j,i)}$$
Hence, setting
$${{\mathscr E}^{(j,i)}}=\{Q\in {\mathscr D}^{(j)}:3Q\in {\mathscr D}^{(j,i)}\},$$
by (\ref{a1}), we obtain
\begin{equation}\label{mlat}
M_{\la, T_{\O}}f(x)\le \sum_{i,j=1}^{3^n}M_{\la/3^n, T_{\Omega}}^{{\mathscr E}^{(j,i)}}f(x)+ C_n\|\O\|_{L^{\infty}}Mf(x),
\end{equation}
where
$$M^{{\mathscr E}^{(j,i)}}_{\la, T_{\Omega}}f(x)=\sup_{Q\ni x:Q\in {\mathscr D}^{(j)},3Q\in {\mathscr D}^{(j,i)}}\big(T_{\Omega}(f\chi_{{\mathbb R}^n\setminus 3Q})\chi_Q\big)^*\big(\la|Q|\big).$$

Fix now two dyadic lattices ${\mathscr D}$ and ${\mathscr D}'$. Let ${\mathcal F}$ be any finite family of cubes $Q$ from ${\mathscr D}$ such that $3Q\in {\mathscr D}'$.
By (\ref{mlat}), by the weak type $(1,1)$ of $M$, and by the monotone convergence theorem, it suffices to prove Lemma \ref{keying} for the dyadic
version of $M_{\la,T_{\O}}$ defined by
$$M^{\mathcal F}_{\la, T_{\O}}f(x)=\begin{cases}
\displaystyle\max_{Q\ni x, Q\in {\mathcal F}}\big(T_{\Omega}(f\chi_{{\mathbb R}^n\setminus 3Q})\chi_Q\big)^*\big(\la|Q|\big), & x\in \cup_{Q\in {\mathcal F}}Q\\
0, &\text{otherwise}.\end{cases}
$$

\subsection{The Calder\'on-Zygmund splitting}
Let $f\in L^1({\mathbb R}^n)$ and let $\a>0$. Apply the Calder\'on-Zygmund decomposition to $f$ at height $A\a$ formed by the cubes from ${\mathscr D}'$,
where $A>0$ will be specified later.
To be more precise, let $M^{{\mathscr D}'}$ be the dyadic maximal operator with respect to ${\mathscr D}'$. Let ${\mathcal P}$ be a family of the maximal
pairwise disjoint cubes forming the set $\{x:M^{{\mathscr D}'}f(x)>A\a\}$. For a cube $P\in {\mathcal P}$ set $b_P=(f-\frac{1}{|P|}\int_Pf)\chi_P$. Next, let
$b=\sum_{P\in {\mathcal P}}b_P$ and $g=f-b$. We have
\begin{equation}\label{embed}
|\{M^{\mathcal F}_{\la, T_{\Omega}}f>\a\}|\le |\{M^{\mathcal F}_{\la/2, T_{\Omega}}g>\a/2\}|+
|\{M^{\mathcal F}_{\la/2, T_{\Omega}}b>\a/2\}|
\end{equation}
(notice that here we have used the standard property of the rearrangement saying that $(f+g)^*(t)\le f^*(t/2)+g^*(t/2)$).

For the good part, we will use the following simple lemma.

\begin{lemma}\label{l2} Assume that $T$ is a sublinear, $L^2$ bounded operator. Then
$$\|M_{\la, T}f\|_{L^{2,\infty}}\le \frac{C_n}{\la^{1/2}}\|T\|_{L^2\to L^2}\|f\|_{L^2}\quad(0<\la<1).$$
\end{lemma}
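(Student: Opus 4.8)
The plan is to reduce the rearrangement bound for $M_{\la,T}$ to the ordinary weak $(1,1)$, respectively $L^2$, bounds for the auxiliary localized maximal operator, and then invoke the $L^2$ boundedness of $T$. The starting observation is that for a fixed cube $Q\ni x$ one has the splitting $T(f\chi_{\mathbb{R}^n\setminus 3Q})=Tf-T(f\chi_{3Q})$, so by the subadditivity property of the rearrangement $(F+G)^*(t)\le F^*(t/2)+G^*(t/2)$ one gets
\[
\big(T(f\chi_{\mathbb{R}^n\setminus 3Q})\chi_Q\big)^*(\la|Q|)\le \big((Tf)\chi_Q\big)^*(\tfrac{\la}{2}|Q|)+\big(T(f\chi_{3Q})\chi_Q\big)^*(\tfrac{\la}{2}|Q|).
\]
Each of the two terms will be handled separately; I would take a supremum over $Q\ni x$ in each and estimate the resulting maximal operators on $L^{2,\infty}$.

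First I would treat the term $\big((Tf)\chi_Q\big)^*(\tfrac{\la}{2}|Q|)$. By Chebyshev, $(h\chi_Q)^*(t)\le \big(\tfrac1t\int_Q|h|\big)$ for any $t$, but that only gives an $L^1$-type bound; instead I would use that $(h\chi_Q)^*(\la|Q|)\le \big(\tfrac{1}{\la|Q|}\int_Q|h|^2\big)^{1/2}\le\la^{-1/2}\big(\tfrac{1}{|Q|}\int_Q|h|^2\big)^{1/2}$ with $h=Tf$. Taking the supremum over $Q\ni x$ produces $\la^{-1/2}\big(M(|Tf|^2)(x)\big)^{1/2}$, where $M$ is the Hardy--Littlewood maximal operator. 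Since $M$ is bounded on $L^1\to L^{1,\infty}$, the operator $f\mapsto \big(M(|Tf|^2)\big)^{1/2}$ is bounded from $L^2$ to $L^{2,\infty}$ with norm $\le C_n\|T\|_{L^2\to L^2}$, which yields the desired bound for this piece.

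Next the local term $\sup_{Q\ni x}\big(T(f\chi_{3Q})\chi_Q\big)^*(\tfrac{\la}{2}|Q|)$. Again by Chebyshev in $L^2$, this is at most $\la^{-1/2}\sup_{Q\ni x}\big(\tfrac{1}{|Q|}\int_Q|T(f\chi_{3Q})|^2\big)^{1/2}$, and by $L^2$ boundedness of $T$, $\int_Q|T(f\chi_{3Q})|^2\le\|T\|^2\int_{3Q}|f|^2$, so the inner quantity is $\le C_n\|T\|\big(\tfrac{1}{|3Q|}\int_{3Q}|f|^2\big)^{1/2}\le C_n\|T\|\big(M(|f|^2)(x)\big)^{1/2}$. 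Again $f\mapsto\big(M(|f|^2)\big)^{1/2}$ is bounded $L^2\to L^{2,\infty}$ (in fact $L^2\to L^2$) with absolute constant, giving the required estimate. Combining the two pieces finishes the proof of Lemma~\ref{l2}.

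This argument is almost entirely soft; the only mild point of care is the use of the $L^2$-Chebyshev inequality for the rearrangement, $F^*(t)\le t^{-1/2}\|F\|_{L^2}$, applied on each cube with $t=\tfrac{\la}{2}|Q|$, and the fact that $\big(M(|h|^2)\big)^{1/2}$ inherits weak $(2,2)$ from the weak $(1,1)$ of $M$ applied to $|h|^2\in L^1$. I do not anticipate a genuine obstacle here; the lemma is a straightforward linearization of the rearrangement into an $L^2$ average, and the real difficulties of the paper lie in Lemma~\ref{keying}, where the good part is controlled by this lemma but the bad part requires Seeger's machinery.
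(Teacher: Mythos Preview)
Your argument is correct and matches the paper's proof essentially line for line: split via sublinearity, apply $L^2$-Chebyshev on the cube to land on $\la^{-1/2}\big(M_2(Tf)(x)+\|T\|_{L^2\to L^2}M_2f(x)\big)$ with $M_2h=(M|h|^2)^{1/2}$, and finish with the $L^2\to L^{2,\infty}$ bound for $M_2$. The only cosmetic slip is writing $T(f\chi_{\mathbb{R}^n\setminus 3Q})=Tf-T(f\chi_{3Q})$ as an equality---for a merely sublinear $T$ you only have the pointwise inequality $|T(f\chi_{\mathbb{R}^n\setminus 3Q})|\le|Tf|+|T(f\chi_{3Q})|$ (apply sublinearity to $f\chi_{\mathbb{R}^n\setminus 3Q}=f+(-f\chi_{3Q})$), but that is exactly what the rearrangement step needs.
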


\begin{proof} Let $x\in Q$. Then, by Chebyshev's inequality,
\begin{eqnarray*}
&&\big(T(f\chi_{{\mathbb R}^n\setminus 3Q})\chi_Q\big)^*\big(\la|Q|\big)\\
&&\le \big((Tf)\chi_Q\big)^*\big(\la|Q|/2\big)+\big(T(f\chi_{3Q})\big)^*\big(\la|Q|/2\big)\\
&&\le\frac{C_n}{\la^{1/2}}\big(M_2(Tf)(x)+\|T\|_{L^2\to L^2}M_2f(x)\big),
\end{eqnarray*}
where $M_2f(x)=M(|f|^2)(x)^{1/2}$. Therefore,
$$M_{\la, T}f(x)\le \frac{C_n}{\la^{1/2}}\big(M_2(Tf)(x)+\|T\|_{L^2\to L^2}M_2f(x)\big),$$
which, along with the $L^2\to L^{2,\infty}$ boundedness of $M_2$, completes the proof.
\end{proof}

Since $\|g\|_{L^{\infty}}\le 2^nA\a$ and $\|g\|_{L^1}\le \|f\|_{L^1}$, by Lemma \ref{l2},
\begin{eqnarray}
|\{x:M^{\mathcal F}_{\la/2,T_{\O}}g(x)>\a/2\}|&\le& \frac{C_n}{\la\a^2}\|T_{\Omega}\|_{L^2\to L^2}^2\int_{{\mathbb R}^n}|g|^2dx\label{goodpart}\\
&\le& \frac{C_nA\|T_{\Omega}\|_{L^2\to L^2}^2}{\la}\frac{\|f\|_{L^1}}{\a}.\nonumber
\end{eqnarray}

\subsection{Estimate of the bad part}
Pick $\psi\in C^{\infty}({\mathbb R})$ such that $\text{supp}\,\psi\subset [1/2,2]$ and $\sum_{j\in {\mathbb Z}}\psi(2^{-j}t)\equiv 1$ for all $t\not=0$.
Denote $K(x)=\frac{\O(x/|x|)}{|x|^n}$, and set
$K_j(x)=\psi(2^{-j}|x|)K(x)$ and $B_l=\sum_{|P|=2^{nl}}b_P(x)$. Then $K=\sum_{j\in {\mathbb Z}}K_j$ and $b=\sum_{l\in {\mathbb Z}}B_l$.

Assume that $|P|=2^{n(j-s)}$ and $x\not\in \ga P, \ga>1$. Since
$$\text{dist}\,((\ga P)^c,P)=\frac{\ga-1}{2}\ell_P=\frac{\ga-1}{2}2^{j-s},$$
we obtain that if $2^s<\frac{\ga-1}{4}$, then $\text{dist}\,(x,P)>2^{j+1}$, and therefore $|K_j|*|b_P|(x)=0$.
Setting in this argument $\ga=9$, we conclude that for every cube $Q$,
\begin{equation}\label{emb}
\text{supp}\Big(\sum_{s<1}\sum_{j\in {\mathbb Z}}K_j*(B_{j-s}\chi_{{\mathbb R}^n\setminus 3Q})\Big)\subset \bigcup_{P\in {\mathcal P}}9P.
\end{equation}

Set now $E=\cup_{P\in {\mathcal P}}9P$ and $E^*=\{x:M^{\mathscr D}\chi_{E}(x)>\la/8\}$.
Observe that
\begin{eqnarray}
|E^*|\le \frac{8}{\la}|E|&\le& \frac{9^{n+1}}{\la}|\cup_{P\in {\mathcal P}}P|\label{estem}\\
&\le& \frac{9^{n+1}}{\la A}\frac{\|f\|_{L^1}}{\a}.\nonumber
\end{eqnarray}

Assume that $x\not \in E^*$, and let $Q\in {\mathcal F}$, $x\in Q$. Then $|Q\cap E|\le \frac{\la}{8}|Q|$, and hence, by (\ref{emb}),
$$
\Big(\sum_{s<1}\sum_{j\in {\mathbb Z}}K_j*(B_{j-s}\chi_{{\mathbb R}^n\setminus 3Q})\chi_Q\Big)^*(\la|Q|/4)=0.
$$
Therefore, using that
$$
T_{\Omega}(b\chi_{{\mathbb R}^n\setminus 3Q})(x)=\sum_{s\in {\mathbb Z}}\sum_{j\in {\mathbb Z}}K_j*(B_{j-s}\chi_{{\mathbb R}^n\setminus 3Q})(x),
$$
we obtain
\begin{eqnarray}
&&(T_{\O}(b\chi_{{\mathbb R}^n\setminus 3Q})\chi_Q)^*(\la|Q|/2)\label{ineq}\\
&&\le \Big(\sum_{s\ge 1}\sum_{j\in {\mathbb Z}}K_j*(B_{j-s}\chi_{{\mathbb R}^n\setminus 3Q})\chi_Q\Big)^*(\la|Q|/4)\nonumber.
\end{eqnarray}

Let $m\in {\mathbb N}, m\ge 2$, that will be specified later. Set
$${\mathcal M}_{\la}b(x)=\max_{Q\ni x, Q\in {\mathcal F}}\Big(\sum_{s\ge m}\sum_{j\in {\mathbb Z}}K_j*(B_{j-s}\chi_{{\mathbb R}^n\setminus 3Q})\chi_Q\Big)^*(\la|Q|/4)$$
for $x\in \cup_{Q\in {\mathcal F}}Q$, and ${\mathcal M}_{\la}b(x)=0$ otherwise.
Set also $T_jf(x)=K_j*f$. Then
\begin{eqnarray*}
&&\Big(\sum_{s\ge 1}\sum_{j\in {\mathbb Z}}K_j*(B_{j-s}\chi_{{\mathbb R}^n\setminus 3Q})\chi_Q\Big)^*(\la|Q|/4)\\
&&\le \sum_{s=1}^{m-1}\sum_{j\in {\mathbb Z}}M_{T_j}(B_{j-s})(x)+{\mathcal M}_{\la}b(x),
\end{eqnarray*}
which, along with (\ref{ineq}), yields
$$M^{\mathcal F}_{\la/2, T_{\Omega}}b(x)\le \sum_{s=1}^{m-1}\sum_{j\in {\mathbb Z}}M_{T_j}(B_{j-s})(x)+{\mathcal M}_{\la}b(x)\quad(x\not\in E^*).$$
Therefore,
\begin{eqnarray}
&&|\{x:M^{\mathcal F}_{\la/2, T_{\Omega}}b(x)>\a/2\}|\le |E^*|\label{estol}\\
&&+\frac{4}{\a}\sum_{s=1}^{m-1}\sum_{j\in {\mathbb Z}}\|M_{T_j}(B_{j-s})\|_{L^1}+|\{x:{\mathcal M}_{\la}b(x)>\a/4\}|.\nonumber
\end{eqnarray}

\subsection{Estimate of $M_{T_j}$}
\begin{lemma}\label{mtj} The operator $M_{T_j}$ is $L^1$ bounded, and
\begin{equation}\label{estmt}
\|M_{T_j}(f)\|_{L^1}\le C_n\|\Omega\|_{L^{\infty}}\|f\|_{L^1}.
\end{equation}
\end{lemma}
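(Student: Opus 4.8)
The plan is to exploit the one property that distinguishes $T_j$ from the full operator $T_{\O}$: its convolution kernel $K_j(x)=\psi(2^{-j}|x|)K(x)$ is bounded and compactly supported, which confines the maximal operator $M_{T_j}$ to cubes of small sidelength and reduces the whole estimate to Fubini's theorem. First I would record two elementary facts. Since $|K(x)|\le\|\O\|_{L^{\infty}}|x|^{-n}$ and $\psi$ is supported in $[1/2,2]$, the kernel $K_j$ is supported in the annulus $\{2^{j-1}\le|x|\le 2^{j+1}\}$ and satisfies $\|K_j\|_{L^{\infty}}\le C_n\|\O\|_{L^{\infty}}2^{-jn}$; in particular $K_j\in L^1$, so $T_jf=K_j*f$ is a genuine convolution. (Note that $K_j$ need not be Dini-continuous when $\O$ is merely bounded, so the pointwise bound for $M_T$ from \cite[Lemma 3.2]{L2} is not directly available, which is why a direct argument is needed here.)

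The second and crucial step is the localization. Fix $x$ and a cube $Q\ni x$. If $\ell_Q>2^{j+1}$, then every $\xi\in Q$ satisfies $\text{dist}(\xi,{\mathbb R}^n\setminus 3Q)\ge\text{dist}(Q,{\mathbb R}^n\setminus 3Q)=\ell_Q>2^{j+1}$, so the ball $B(\xi,2^{j+1})$, which contains the support of $K_j(\xi-\cdot)$, lies entirely inside $3Q$; hence $T_j(f\chi_{{\mathbb R}^n\setminus 3Q})(\xi)=0$ for all $\xi\in Q$. Therefore the supremum defining $M_{T_j}f(x)$ may be restricted to cubes with $\ell_Q\le 2^{j+1}$. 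For any such cube and any $\xi\in Q$ we have $|\xi-x|\le\text{diam}(Q)\le\sqrt n\,2^{j+1}$, so $B(\xi,2^{j+1})\subset B(x,c_n2^{j})$ with $c_n=2(1+\sqrt n)$, and consequently
$$|T_j(f\chi_{{\mathbb R}^n\setminus 3Q})(\xi)|\le\|K_j\|_{L^{\infty}}\int_{2^{j-1}\le|\xi-y|\le 2^{j+1}}|f(y)|\,dy\le C_n\|\O\|_{L^{\infty}}\frac{1}{2^{jn}}\int_{B(x,c_n2^{j})}|f(y)|\,dy.$$
Taking the supremum over admissible $Q\ni x$ yields the pointwise bound $M_{T_j}f(x)\le C_n\|\O\|_{L^{\infty}}2^{-jn}\int_{B(x,c_n2^{j})}|f|$.

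Finally I would integrate this bound in $x$ and apply Fubini's theorem:
$$\|M_{T_j}f\|_{L^1}\le C_n\|\O\|_{L^{\infty}}2^{-jn}\int_{{\mathbb R}^n}|f(y)|\,|B(y,c_n2^{j})|\,dy=C_n'\|\O\|_{L^{\infty}}\|f\|_{L^1},$$
which is exactly (\ref{estmt}). I do not expect a genuine obstacle here: the entire content of the lemma is the observation that the compact support of $K_j$ forces $M_{T_j}$ to see only cubes of sidelength $O(2^j)$, after which $M_{T_j}f(x)$ is dominated by the single average $2^{-jn}\int_{B(x,c_n2^j)}|f|$, whose $L^1$ norm is a dimensional multiple of $\|f\|_{L^1}$ by Fubini — this is sharper than the bound $M_{T_j}f\lesssim\|\O\|_{L^{\infty}}Mf$, which would only give weak type $(1,1)$. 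The only minor points are the measurability of $M_{T_j}f$ (handled as usual by restricting to cubes with rational vertices) and the precise constants in the support and sup-norm of $K_j$, all of which are absorbed into $C_n$.
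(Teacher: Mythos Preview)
Your proof is correct and follows essentially the same approach as the paper: both arguments observe that the compact support of $K_j$ forces the supremum in $M_{T_j}$ to be attained on cubes with $\ell_Q\lesssim 2^{j}$, then bound $|T_j(f\chi_{{\mathbb R}^n\setminus 3Q})(\xi)|$ pointwise by $C_n\|\Omega\|_{L^\infty}2^{-jn}\int_{B(x,c_n2^j)}|f|$, and conclude by Fubini. The only cosmetic difference is the cutoff threshold (you take $\ell_Q>2^{j+1}$, the paper takes $\ell_Q>2^{j-1}$), which affects only the dimensional constant.
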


\begin{proof}
Let $x,\xi\in Q$. If $\ell_Q>2^{j-1}$, then $\xi-y\not\in \text{supp}(K_j)$ for every $y\in {\mathbb R}^n\setminus 3Q$, and hence
$$K_j*(f\chi_{{\mathbb R}^n\setminus 3Q})(\xi)=0.$$
Assume that $\ell_Q\le 2^{j-1}$. Suppose also that $y\in {\mathbb R}^n\setminus 3Q$ and $|y-\xi|\le 2^{j+1}$. Then
$$|y-x|\le |y-\xi|+|\xi-x|\le 2^{j+1}+\sqrt n\ell_Q\le 2^j(2+\sqrt{n}/2),$$
and hence,
\begin{eqnarray*}
|K_j(\xi-y)|&\le& \frac{\|\Omega\|_{L^{\infty}}}{2^{(j-1)n}}\chi_{\{|y-\xi|\le 2^{j+1}\}}(y)\\
&\le& \frac{\|\Omega\|_{L^{\infty}}}{2^{(j-1)n}}\chi_{\{|y-x|\le 2^j(2+\sqrt{n}/2)\}}(y).
\end{eqnarray*}
Therefore,
$$|K_j*(f\chi_{{\mathbb R}^n\setminus 3Q})(\xi)|\le \frac{\|\Omega\|_{L^{\infty}}}{2^{(j-1)n}}\int_{\{|y-x|\le 2^j(2+\sqrt{n}/2)\}}|f(y)|dy$$
Thus,
$$
M_{T_j}(f)(x)\le \frac{\|\Omega\|_{L^{\infty}}}{2^{(j-1)n}}\int_{\{|y-x|\le 2^j(2+\sqrt{n}/2)\}}|f(y)|dy,
$$
which implies (\ref{estmt}).
\end{proof}

Applying Lemma \ref{mtj} yields
\begin{eqnarray}
\sum_{s=1}^{m-1}\sum_{j\in {\mathbb Z}}\|M_{T_j}(B_{j-s})\|_{L^1}&\le& C_n\|\Omega\|_{L^{\infty}}\sum_{s=1}^{m-1}\sum_{j\in {\mathbb Z}}\|B_{j-s}\|_{L^1}\label{sumpart}\\
&\le& C_nm\|\Omega\|_{L^{\infty}}\|f\|_{L^1}.\nonumber
\end{eqnarray}

\subsection{Estimate of ${\mathcal M}_{\la}b$}
Write the set $\{x:{\mathcal M}_{\la}b(x)>\a/4\}$ as the union of the maximal
pairwise disjoint cubes $Q_i\in {\mathcal F}$ with the property
$$
\Big(\sum_{s\ge m}\sum_{j\in {\mathbb Z}}K_j*(B_{j-s}\chi_{{\mathbb R}^n\setminus 3Q_i})\chi_{Q_i}\Big)^*(\la|Q_i|/4)>\a/4,
$$
or, equivalently,
$$
|Q_i|<\frac{4}{\la}|\{x\in Q_i:|\sum_{s\ge m}\sum_{j\in {\mathbb Z}}K_j*(B_{j-s}\chi_{{\mathbb R}^n\setminus 3Q_i})(x)|>\a/4\}|.
$$
It follows that the cubes $Q_i$ can be selected into two disjoint families: let ${\mathcal A}_1$ be the family of $Q_i$ for which
$$
|Q_i|<\frac{8}{\la}|\{x\in Q_i:|\sum_{s\ge m}\sum_{j\in {\mathbb Z}}K_j*B_{j-s}(x)|>\a/8\}|,
$$
and let ${\mathcal A}_2$ be the family of $Q_i$ for which
$$
|Q_i|<\frac{8}{\la}|\{x\in Q_i:|\sum_{s\ge m}\sum_{j\in {\mathbb Z}}K_j*(B_{j-s}\chi_{3Q_i})(x)|>\a/8\}|.
$$

\subsection{The cubes from the first family} We have
\begin{equation}\label{firstf}
\sum_{Q_i\in {\mathcal A}_1}|Q_i|<\frac{8}{\la}
|\{x\in {\mathbb R}^n:|\sum_{s\ge m}\sum_{j\in {\mathbb Z}}K_j*B_{j-s}(x)|>\a/8\}|.
\end{equation}

To estimate the right-hand side here, we use the following result by Seeger \cite{S} (in the next statement we unified Lemmata 2.1 and 2.2 from~\cite{S}).

\begin{lemma}\label{seeger} Let $\{H_j\}$ be a family of functions supported in  $ \{x:2^{j-2}\le |x|\le 2^{j+2}\}$ and such that the estimates
$$\sup_{0\le l\le N}\sup_jr^{n+l}\Big|\Big(\frac{\partial}{\partial r}\Big)^lH_j(r\theta)\Big|\le M_N$$
hold uniformly in $\theta\in S^{n-1}$ and $r>0$. Then for every $0<\kappa<1$ and any natural $s>3$ one can split $H_j=\Gamma_j^s+(H_j-\Gamma_j^s)$
such that the following properties hold.
\begin{enumerate}
\item Let ${\mathcal Q}$ be a collection of pairwise disjoint dyadic cubes, and let ${\mathcal Q}_m=\{Q\in {\mathcal Q}:|Q|=2^{nm}\}, m\in {\mathbb Z}.$ For each $Q\in {\mathcal Q}$ let $f_Q$ be an integrable function supported in $Q$ satisfying $\int|f_Q|dx\le \a|Q|$. Let $F_m=\sum_{Q\in {\mathcal Q}_m}f_Q$.
Then for $s>3$,
$$\Big\|\sum_j\Gamma_j^s*F_{j-s}\Big\|_{L^2}^2\le C_nM_0^22^{-s(1-\kappa)}\a\sum_Q\|f_Q\|_{L^1}.$$
\item Let $Q$ be a cube of sidelength $2^{j-s}$ and let $b_Q$ be integrable and supported in $Q$ with
$\int_Qb_Q=0$. Then for $N\ge n+1$ and $0\le \e\le 1$,
$$\|(H_j-\Gamma_j^s)*b_Q\|_{L^1}\le C_{n,N}(M_02^{-s\e}+M_N2^{s(n+(\e-\kappa)N)})\|b_Q\|_{L^1}.$$
\end{enumerate}
\end{lemma}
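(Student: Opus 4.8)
My plan is to carry out Seeger's construction from \cite{S}: the statement is essentially the combination of Lemmata~2.1 and~2.2 of \cite{S}, so the only additional work is to keep the dependence on $M_0$ and $M_N$ explicit and to note that every estimate below is uniform in $\theta$, since the operators entering the splitting act by convolution and do not see the angular variable.

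First I would fix a nonnegative $\rho\in C^\infty$ with $\widehat\rho$ supported in the unit ball and $\widehat\rho\equiv 1$ near the origin, write $\rho_t(x)=t^{-n}\rho(x/t)$, and set
\[
\Gamma_j^s=H_j*\rho_{2^{j-s}},\qquad H_j-\Gamma_j^s=H_j*(\delta-\rho_{2^{j-s}}).
\]
The heuristic is that $F_{j-s}$ (resp.\ $b_Q$) lives at the cube scale $2^{j-s}$, a factor $2^{s}$ finer than the kernel scale $2^{j}$, and the splitting isolates in $\Gamma_j^s$ the frequencies of $H_j$ coarser than the cube scale (so $\widehat{\Gamma_j^s}$ is supported where $|\xi|\lesssim 2^{s-j}$) and leaves the finer frequencies $|\xi|\gtrsim 2^{s-j}$ in $H_j-\Gamma_j^s$; for part~(2) one further expands $\delta-\rho_{2^{j-s}}=\sum_{k\ge0}\psi_{2^{j-s-k}}$ into mean-zero Littlewood--Paley pieces, each at a scale finer than $2^{j}$.

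For part~(1) a single-scale $L^2$ bound is routine: since $r^n|H_j(r\theta)|\le M_0$ and $H_j$ is supported in $|x|\sim 2^j$ one has $\|\widehat{H_j}\|_\infty\le C_nM_0$, while $\rho_{2^{j-s}}*f_Q$ is, up to rapidly decaying tails, concentrated in a fixed dilate of $Q$ with $\|\rho_{2^{j-s}}*f_Q\|_2\lesssim 2^{-(j-s)n/2}\|f_Q\|_1$; the bounded overlap of $\{3Q:Q\in\mathcal Q_{j-s}\}$ together with $\int|f_Q|\le\a|Q|$ then yields $\|\rho_{2^{j-s}}*F_{j-s}\|_2^2\lesssim\a\sum_{Q\in\mathcal Q_{j-s}}\|f_Q\|_1$, whence $\|\Gamma_j^s*F_{j-s}\|_2^2\lesssim M_0^2\,\a\sum_{Q\in\mathcal Q_{j-s}}\|f_Q\|_1$. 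The real content is to assemble these over $j$ with the extra geometric gain $2^{-s(1-\kappa)}$: the pieces $\Gamma_j^s*F_{j-s}$ are far from orthogonal (their frequency supports are nested), and recovering both the summation and the decay in $s$ is the core of Seeger's argument, which exploits the scale gap $2^{s}$ and lets $\kappa$ be taken arbitrarily close to $1$; this is the ``deep machinery'' referred to in the introduction.

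For part~(2), writing $(H_j-\Gamma_j^s)*b_Q=H_j*g_Q$ with $g_Q:=b_Q-\rho_{2^{j-s}}*b_Q$ of mean zero, concentrated in a fixed dilate of $Q$ and with $\|g_Q\|_1\lesssim\|b_Q\|_1$, I would estimate $\|H_j*g_Q\|_1$ by interpolating the trivial bound $\lesssim M_0\|g_Q\|_1$ against a smooth bound extracted from the hypothesis $\sup_{0\le l\le N}\sup_j r^{n+l}|(\partial/\partial r)^lH_j(r\theta)|\le M_N$ through the decay of $\widehat{H_j}$ on the range $|\xi|\gtrsim 2^{s-j}$ carried by $H_j-\Gamma_j^s$ (converted back to $L^1$ by Cauchy--Schwarz over the $O(2^{jn})$-measure support of $H_j*g_Q$); the two regimes of the interpolation produce, respectively, the terms $M_0\,2^{-s\e}$ and $M_N\,2^{s(n+(\e-\kappa)N)}$. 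Because only \emph{radial} smoothness is assumed, the passage from radial derivative bounds to Fourier decay loses a factor of order $(n-1)/2$ coming from the sphere, and this is precisely why the condition $N\ge n+1$ is imposed. I expect the main obstacle to be exactly this rough regime of part~(2): reproducing Seeger's analysis there and verifying that it produces the stated $M_0,M_N$ dependence is considerably more delicate than part~(1), where the single-scale estimate is elementary and only the summation with the geometric gain carries weight.
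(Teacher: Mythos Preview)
The paper does not prove this lemma. It is stated as a quotation from Seeger~\cite{S} (the paper says explicitly: ``we use the following result by Seeger~\cite{S} (in the next statement we unified Lemmata~2.1 and~2.2 from~\cite{S})'') and is then applied as a black box to the kernels $K_j$. There is therefore no proof in the paper to compare your proposal against; if your aim is to match the paper's treatment, the correct thing to do is simply to cite~\cite{S}.

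As for your sketch itself: you correctly identify the source and the general shape of Seeger's construction---a smoothing of $H_j$ at the finer scale $2^{j-s}$, an $L^2$ piece handled via Fourier support and almost-orthogonality, and an $L^1$ piece handled via cancellation and radial regularity. But what you have written is a roadmap, not a proof: you explicitly defer the two substantive points (the summation over $j$ with the $2^{-s(1-\kappa)}$ gain in part~(1), and the interpolation producing the $M_0\,2^{-s\e}+M_N\,2^{s(n+(\e-\kappa)N)}$ dichotomy in part~(2)) to~\cite{S}. In particular, the parameter $\kappa$ does not appear anywhere in your definition $\Gamma_j^s=H_j*\rho_{2^{j-s}}$, so as written your splitting cannot yet produce estimates depending on $\kappa$; in Seeger's actual argument $\kappa$ enters through an additional decomposition (a partition of the sphere into caps of aperture $2^{-\kappa s}$, combined with the radial mollification), and both the $L^2$ gain in~(1) and the second term in~(2) come from that angular localisation. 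Without that ingredient your outline does not close.
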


Notice that $K_j$ is supported in $\{2^{j-1}\le|x|\le 2^{j+1}\}$ and
$$
\sup_{0\le l\le N}\sup_jr^{n+l}\Big|\Big(\frac{\partial}{\partial r}\Big)^lK_j(r\theta)\Big|\le C_{N,n}\|\Omega\|_{L^{\infty}}.
$$
Therefore, we are in position to apply Lemma~\ref{seeger}. Choose in this lemma $\kappa=\frac{1}{2}$ and $\e=\frac{1}{4}$.
We obtain
\begin{eqnarray}
&&\Big|\Big\{x:|\sum_{s\ge m}\sum_{j\in {\mathbb Z}}K_j*B_{j-s}|>\a/8\Big\}\Big|
\label{step}\\
&&\le
|\{x:|I(x)|>\a/16\}|+|\{x:|II(x)|>\a/16\}|,\nonumber
\end{eqnarray}
where
$$I(x)=\sum_{s\ge m}\sum_{j\in {\mathbb Z}}\Gamma_j^s*B_{j-s}(x)$$
and
$$II(x)=\sum_{s\ge m}\sum_{j\in {\mathbb Z}}(K_j-\Gamma_j^s)*B_{j-s}(x).$$

Observe that
$$\int_P|b_P|dx\le 2\int_P|f|\le 2^{n+1}A\a|P|.$$
Therefore, the first part of Lemma \ref{seeger} yields
\begin{eqnarray*}
&&|\{x:|I(x)|>\a/16\}|\le \frac{256}{\a^2}\|I\|_{L^2}^2\le \frac{256}{\a^2}\Big(
\sum_{s\ge m}\|\sum_j\Gamma_j^s*B_{j-s}\|_{L^2}\Big)^2\\
&&\le \frac{C_n\|\O\|_{L^{\infty}}^2}{\a^2}\Big(\sum_{s\ge m}2^{-s/4}
\Big(A\a\sum_P\|b_P\|_{L^1}\Big)^{1/2}\Big)^2\\
&&\le C_n\|\O\|_{L^{\infty}}^2\frac{A2^{-m/2}}{\a}\|f\|_{L^1}.
\end{eqnarray*}

Applying the second part of Lemma \ref{seeger} with $N=8n$ yields,
\begin{eqnarray*}
&&|\{x:|II(x)|>\a/16\}|\le\frac{16}{\a}\sum_{s\ge m}\|\sum_{j\in {\mathbb Z}}
(K_j-\Gamma_j^s)*B_{j-s}\|_{L^1}\\
&&\le \frac{C_{n}\|\O\|_{L^{\infty}}}{\a}\sum_{s\ge m}
\Big(2^{-s/4}+2^{-ns}\Big)\|f\|_{L^1}\le C_{n}\|\O\|_{L^{\infty}}2^{-m/4}\frac{\|f\|_{L^1}}{\a}.
\end{eqnarray*}

Combining the estimates for $I$ and $II$ with (\ref{firstf}) and (\ref{step}), we obtain
\begin{equation}\label{firstpart}
\sum_{Q_i\in {\mathcal A}_1}|Q_i|\le\frac{C_n}{\la}\Big(\|\O\|_{L^{\infty}}^2A2^{-m/2}+\|\O\|_{L^{\infty}}2^{-m/4}\Big)\frac{\|f\|_{L^1}}{\a}.
\end{equation}

\subsection{The cubes from the second family} Let $Q_i\in {\mathcal A}_2$.
Observe that the cube $3Q_i$ and the cubes appearing in the definition of $B_{j-s}$ are from the same dyadic lattice ${\mathscr D}'$. Therefore,
setting
$$B^{(i)}_{j-s}=\sum_{P:|P|=2^{(j-s)n},P\subset 3Q_i}b_P(x),$$
we obtain that for $2^{j-s}<3\ell_{Q_i}$,
$$B_{j-s}\chi_{3Q_i}=B^{(i)}_{j-s}.$$

Assume that $2^{j-s}\ge 3\ell_{Q_i}$. Then for all $x\in Q_i$ and $y\in 3Q_i$,
$$|x-y|\le 2\sqrt n\ell_{Q_i}\le \frac{4\sqrt n}{3}\frac{1}{2^s}2^{j-1},$$
and therefore, $x-y\not\in\text{supp}\,K_j$, provided $2^s>4\sqrt n/3$.
Hence, assuming that $m$ is such that $2^{m}>4\sqrt n/3$, for all $s\ge m$ we obtain
$$K_j*(B_{j-s}\chi_{3Q_i})(x)=0,$$
which implies
\begin{eqnarray*}
|Q_i|&<&\frac{8}{\la}|\{x\in Q_i:|\sum_{s\ge m}\sum_{j\in {\mathbb Z}}K_j*(B_{j-s}\chi_{3Q_i})(x)|>\a/8\}|\\
&\le& \frac{8}{\la}|\{x:|\sum_{s\ge m}\sum_{j\in {\mathbb Z}}K_j*B^{(i)}_{j-s}(x)|>\a/8\}|.
\end{eqnarray*}

Denote by $\ga$ the constant appearing on the right-hand side of (\ref{firstpart}), that is, let
$$\ga=\frac{C_n}{\la\a}\Big(\|\O\|_{L^{\infty}}^2A2^{-m/2}+\|\O\|_{L^{\infty}}2^{-m/4}\Big).$$
Then, arguing exactly as in the proof of (\ref{firstpart}) and using that all the cubes in the definition of $B^{(i)}_{j-s}$
are supported in $3Q_i$, we obtain
$$|Q_i|\le\ga\int_{3Q_i}|f|.$$
Hence,
$$
\cup_{Q_i\in {\mathcal A}_2}Q_i\subset \{x: Mf(x)>1/(3^n\ga)\},
$$
which implies that the cubes from the second family satisfy the same estimate as (\ref{firstpart}). Therefore,
$$|\{x:{\mathcal M}_{\la}b(x)>\a/4\}|\le
\frac{C_n}{\la}\Big(\|\O\|_{L^{\infty}}^2A2^{-m/2}+\|\O\|_{L^{\infty}}2^{-m/4}\Big)\frac{\|f\|_{L^1}}{\a}.$$

\subsection{Conclusion of the proof} Assume that $2^{m}>4\sqrt n/3$.
Combining the last estimate with (\ref{embed}), (\ref{goodpart}),
(\ref{estem}), (\ref{estol}) and (\ref{sumpart}) yields
\begin{eqnarray*}
&&\|M_{\la,T_{\O}}\|_{L^1\to L^{1,\infty}}\le C_n\Big(\|\O\|_{L^{\infty}}m\\
&&+\frac{1}{\la}\Big(A\|T_{\O}\|_{L^2\to L^2}^2+\frac{9^{n+1}}{A}+\|\O\|_{L^{\infty}}^2A2^{-m/2}+\|\O\|_{L^{\infty}}2^{-m/4}\Big)\Big).
\end{eqnarray*}
From this, setting $\nu=\max(\|\O\|_{L^{\infty}},\|T_{\O}\|_{L^2\to L^2}/\d)$, where $0<\d\le 1$,
and $A=\frac{2^{m/4}}{\nu}$, we obtain
\begin{equation}\label{lest}
\|M_{\la,T_{\O}}\|_{L^1\to L^{1,\infty}}\le C_n\nu\Big(\frac{1}{\la}\big(2^{m/4}\d^2+2^{-m/4}\big)+m\Big).
\end{equation}

One can assume that $\d^{-4}>4\sqrt n/3$ since otherwise Lemma \ref{keying} is trivial.
Then, set in (\ref{lest}) $m\in {\mathbb N}$ such that $2^{m-1}\le \d^{-4}<2^m$. We obtain
$$\|M_{\la,T_{\O}}\|_{L^1\to L^{1,\infty}}\le C_n\nu\Big(\frac{\d}{\la}+\log\frac{2}{\d}\Big),$$
which completes the proof.

\section{A sparse domination principle}
We start with the following general result which can be described in terms of the bi-sublinear maximal operator
${\mathscr M}_T$ defined for a given operator $T$ by
$${\mathscr M}_T(f,g)(x)=\sup_{Q\ni x}\frac{1}{|Q|}\int_Q|T(f\chi_{{\mathbb R}^n\setminus 3Q})||g|dy,$$
where the supremum is taken over all cubes $Q\subset {\mathbb R}^n$ containing $x$.

\begin{theorem}\label{pointsparse}
Let $1\le q\le r$ and $s\ge 1$. Assume that $T$ is a sublinear operator of weak type $(q,q)$, and ${\mathscr M}_T$ maps
$L^r\times L^s$ into $L^{\nu,\infty}$, where $\frac{1}{\nu}=\frac{1}{r}+\frac{1}{s}$.
Then, for every compactly supported $f\in L^r({\mathbb R}^n)$ and every $g\in L^s_{loc}$, there exists a $\frac{1}{2\cdot 3^n}$-sparse family ${\mathcal S}$ such that
\begin{equation}\label{mainin}
|\langle Tf,g \rangle|\le K\sum_{Q\in {\mathcal S}}\langle f\rangle_{r,Q}\langle g\rangle_{s,Q}|Q|,
\end{equation}
where
$$K=C_n\big(\|T\|_{L^q\to L^{q,\infty}}+\|{\mathscr M}_T\|_{L^r\times L^s\to L^{\nu,\infty}}\big).$$
\end{theorem}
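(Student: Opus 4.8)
The plan is to run the by-now standard recursive/stopping-time argument that produces a sparse family, using the two hypotheses — the weak $(q,q)$ bound for $T$ and the $L^r\times L^s\to L^{\nu,\infty}$ bound for $\mathscr M_T$ — to control the "exceptional set" at each step. First I would fix a compactly supported $f\in L^r$ and $g\in L^s_{loc}$, and reduce to proving the local claim: for a single cube $Q_0$, there is a $\frac12$-sparse subfamily of $\mathcal D(Q_0)$ (dyadic with respect to $Q_0$) so that $\int_{Q_0}|T(f\chi_{3Q_0})||g|\,dy$ is dominated by the sparse sum over that subfamily; the passage from this local statement to the global one is the usual covering of $\mathbb R^n$ by a bounded number of dyadic lattices via the Three Lattice Theorem (Theorem \ref{three}) together with the observation that $Tf$ on $Q_0$ only "sees" $f\chi_{3Q_0}$ up to an error handled by $\mathscr M_T$ on the parent cube — the factor $\frac{1}{2\cdot 3^n}$ in the sparseness constant comes precisely from this lattice bookkeeping.

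For the local step, inside $Q_0$ I would define the exceptional set
$$
E=\{x\in Q_0: M_q(f\chi_{3Q_0})(x)>C_n\langle f\rangle_{r,3Q_0}\}\cup\{x\in Q_0:\mathscr M_T(f\chi_{3Q_0},g)(x)>C_n\langle f\rangle_{r,3Q_0}\langle g\rangle_{s,3Q_0}\},
$$
where $M_q h=M(|h|^q)^{1/q}$, and choose the constant $C_n$ large enough (using the weak $(q,q)$ bound of $M_q$, hence of $T$ via the weak $(q,q)$ hypothesis, and the $L^r\times L^s\to L^{\nu,\infty}$ bound of $\mathscr M_T$, together with $\frac1\nu=\frac1r+\frac1s$ and Hölder) that $|E|\le \frac12|Q_0|$. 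Let $\{P_j\}$ be the maximal dyadic subcubes of $Q_0$ with $P_j\cap E\ne\emptyset$ (equivalently the Calderón–Zygmund cubes of the two maximal functions at the chosen heights); then $\sum_j|P_j|\le\frac12|Q_0|$, so $E_{Q_0}:=Q_0\setminus\bigcup_j P_j$ has measure $\ge\frac12|Q_0|$, giving sparseness. The key pointwise decomposition is
$$
|T(f\chi_{3Q_0})(x)|\le \Big|T\Big(f\chi_{3Q_0\setminus\bigcup_j 3P_j}\Big)(x)\Big|+\sum_j\big|T(f\chi_{3P_j})(x)\big|,
$$
and on $Q_0\setminus\bigcup_j P_j$ one controls the first term's contribution to $\int|T(\cdot)||g|$ by $\mathscr M_T$ on $Q_0$ (it is $\le |Q_0|\,\mathscr M_T(f\chi_{3Q_0},g)(\xi)$ for any $\xi\in E_{Q_0}$, hence $\le C_n\langle f\rangle_{r,3Q_0}\langle g\rangle_{s,3Q_0}|Q_0|$), while the pieces $T(f\chi_{3P_j})$ are handled by recursing on each $P_j$; one also checks $3P_j\subset 3Q_0$ so the recursion stays inside the relevant lattice. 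Iterating and summing the geometric series in the generations yields \eqref{mainin} with $K=C_n(\|T\|_{L^q\to L^{q,\infty}}+\|\mathscr M_T\|_{L^r\times L^s\to L^{\nu,\infty}})$.

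I expect the main obstacle to be organizing the recursion cleanly so that the errors at every scale are genuinely absorbed by the \emph{parent} cube's averages and not double-counted: one must be careful that in the term $T(f\chi_{3Q_0\setminus\bigcup 3P_j})$ the cutoff set is $3Q_0$ minus the \emph{thickened} stopping cubes $3P_j$, since it is $\chi_{\mathbb R^n\setminus 3P}$ that appears in the definition of $\mathscr M_T$, and to verify that $\{x\in Q_0:x\notin 3P_j \text{ for the relevant }j\}$ is still controlled — here the standard trick is that for $x\in P_j$ the contribution of $f$ on $3Q_0\setminus 3P_j$ to $T$ at $x$ is again bounded by $\mathscr M_T(f\chi_{3Q_0},g)$ evaluated at a good point, at the cost of the constant $3^n$. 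A secondary technical point is ensuring the family $\mathcal S$ is genuinely $\frac{1}{2\cdot 3^n}$-sparse globally after merging the $3^n$ (or $3^n\cdot 3^n$, as in Section 2) lattices; this is routine once the local $\frac12$-sparseness is in hand, by scaling the disjoint sets $E_Q$ down by the number of lattices used.
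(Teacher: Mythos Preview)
Your recursion has the right architecture, but the stopping condition and the way you split $T(f\chi_{3Q_0})$ both break. The first part of your exceptional set uses $M_q(f\chi_{3Q_0})$ rather than $|T(f\chi_{3Q_0})|$; knowing that $M_q(f\chi_{3Q_0})$ is small on the good set $Q_0\setminus\bigcup_j P_j$ gives you no pointwise control of $T$ there, since the weak $(q,q)$ bound for $T$ does not say $|Tf|\lesssim M_q f$. And your attempt to rescue this via ${\mathscr M}_T$ fails: you claim $\int_{Q_0\setminus\bigcup P_j}|T(f\chi_{3Q_0\setminus\bigcup 3P_j})||g|\le |Q_0|\,{\mathscr M}_T(f\chi_{3Q_0},g)(\xi)$, but plugging $Q=Q_0$ into the definition of ${\mathscr M}_T(f\chi_{3Q_0},g)$ gives $\frac{1}{|Q_0|}\int_{Q_0}|T(f\chi_{3Q_0\setminus 3Q_0})||g|=0$, and smaller cubes only give local averages, not the whole integral over $Q_0$. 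Relatedly, your global input-splitting $T(f\chi_{3Q_0})\le|T(f\chi_{3Q_0\setminus\bigcup 3P_j})|+\sum_j|T(f\chi_{3P_j})|$ is not even justified by sublinearity, since the triples $3P_j$ may overlap.

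The fix, which is what the paper does, is twofold. First, put $|T(f\chi_{3Q_0})|$ itself into the stopping set: define $E_1=\{x\in Q_0:|T(f\chi_{3Q_0})(x)|>A\langle f\rangle_{q,3Q_0}\}$, which has measure $\le c|Q_0|$ directly by the weak $(q,q)$ hypothesis on $T$; then on $Q_0\setminus\bigcup_j P_j\subset E_1^c$ (a.e.) you have the pointwise bound $|T(f\chi_{3Q_0})|\le A\langle f\rangle_{q,3Q_0}\le A\langle f\rangle_{r,3Q_0}$ and hence $\int_{Q_0\setminus\bigcup P_j}|T(f\chi_{3Q_0})||g|\le A\langle f\rangle_{r,3Q_0}\langle g\rangle_{s,Q_0}|Q_0|$. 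Second, split by \emph{output region} first: on each $P_j$ write $T(f\chi_{3Q_0})=T(f\chi_{3Q_0\setminus 3P_j})+T(f\chi_{3P_j})$ (one $j$ at a time, so no overlap issue), use a good point $\xi_j\in P_j\cap E_2^c$ and the choice $Q=P_j$ in the definition of ${\mathscr M}_T$ to get $\int_{P_j}|T(f\chi_{3Q_0\setminus 3P_j})||g|\le B\langle f\rangle_{r,3Q_0}\langle g\rangle_{s,Q_0}|P_j|$, and recurse on $\int_{P_j}|T(f\chi_{3P_j})||g|$. As a minor point, the globalization does not need the Three Lattice Theorem: one simply covers $\mathbb R^n$ by cubes $R_j$ with $\text{supp}\,f\subset 3R_j$, runs the local argument on each, and sets $\mathcal S=\{3Q:Q\in\bigcup_j\mathcal F_j\}$; the factor $3^n$ in the sparseness constant comes from dilating each $Q$ to $3Q$.
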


\begin{proof} The proof is very similar to the one of \cite[Th. 4.2]{L2}.

Fix a cube $Q_0$. Define a local analogue of ${\mathscr M}_T$ by
$${\mathscr M}_{T,Q_0}(f,g)(x)=\sup_{Q\ni x,Q\subset Q_0}\frac{1}{|Q|}\int_Q|T(f\chi_{3Q_0\setminus 3Q})||g|dy.$$

Consider the sets
$$E_1=\{x\in Q_0: |T(f\chi_{3Q_0})(x)|>A\langle f\rangle_{q,3Q_0}\}$$
and
$$E_2=\{x\in Q_0: {\mathscr M}_{T,Q_0}(f,g)(x)>B\langle f\rangle_{r,3Q_0}\langle g\rangle_{s,Q_0}\},$$
where $A$ and $B$ are chosen in such a way that
$$\max(|E_1|,|E_2|)\le \frac{1}{2^{n+3}}|Q_0|,$$
namely, we take
$$A=(8\cdot 6^n)^{1/q}\|T\|_{L^q\to L^{q,\infty}}\quad\text{and}\quad B=(2^{n+3})^{1/\nu}3^{n/r}\|{\mathscr M}_T\|_{L^r\times L^s\to L^{\nu,\infty}}.$$
Then, the set $\Omega=E_1\cup E_2$ satisfies $|\Omega|\le\frac{1}{2^{n+2}}|Q_0|$.

The Calder\'on-Zygmund decomposition applied to the function $\chi_{\Omega}$ on $Q_0$ at height $\la=\frac{1}{2^{n+1}}$
produces pairwise disjoint cubes $P_j\in {\mathcal D}(Q_0)$ such that
$$\frac{1}{2^{n+1}}|P_j|\le |P_j\cap E|\le \frac{1}{2}|P_j|$$
and $|\Omega\setminus \cup_jP_j|=0$. It follows that $\sum_j|P_j|\le \frac{1}{2}|Q_0|$ and $P_j\cap \Omega^{c}\not=\emptyset$.

Since $|\Omega\setminus \cup_jP_j|=0$, we have
$$\int_{Q_0\setminus \cup_jP_j}|T(f\chi_{3Q_0})||g|\le A\langle f\rangle_{q,3Q_0}\int_{Q_0}|g|.$$
On the other hand, since $P_j\cap \Omega^{c}\not=\emptyset$, we obtain
$$\int_{P_j}|T(f\chi_{3Q_0\setminus 3P_j})||g|\le B\langle f\rangle_{r,3Q_0}\langle g\rangle_{s,Q_0}|P_j|.$$
Combining these estimates along with H\"older's inequality (here we use that $q\le r$ and $s\ge 1$) yields
\begin{eqnarray*}
&&\int_{Q_0}|T(f\chi_{3Q_0})||g|\le \int_{Q_0\setminus \cup_jP_j}|T(f\chi_{3Q_0})||g|\\
&&+\sum_j\int_{P_j}|T(f\chi_{3Q_0\setminus 3P_j})||g|+\sum_j\int_{P_j}|T(f\chi_{3P_j})||g|\\
&&\le(A+B)\langle f\rangle_{r,3Q_0}\langle g\rangle_{s,Q_0}|Q_0|+\sum_j\int_{P_j}|T(f\chi_{3P_j})||g|.
\end{eqnarray*}

Since $\sum_j|P_j|\le \frac{1}{2}|Q_0|$, iterating the above estimate, we obtain that there is a $\frac{1}{2}$-sparse family ${\mathcal F}\subset {\mathcal D}(Q_0)$
such that
\begin{equation}\label{itpart}
\int_{Q_0}|T(f\chi_{3Q_0})||g|\le \sum_{Q\in {\mathcal F}}(A+B)\langle f\rangle_{r,3Q}\langle g\rangle_{s,Q}|Q|
\end{equation}
(notice that ${\mathcal F}=\{P_j^k\},k\in {\mathbb Z}_+$, where
$\{P_j^0\}=\{Q_0\}$, $\{P_j^1\}=\{P_j\}$ and $\{P_j^k\}$ are the cubes obtained at the $k$-th stage of the iterative process).

Take now a partition of ${\mathbb R}^n$ by cubes $R_j$ such that $\text{supp}\,(f)\subset 3R_j$ for each $j$. For example, take a cube $Q_0$ such that
$\text{supp}\,(f)\subset Q_0$ and cover $3Q_0\setminus Q_0$ by $3^n-1$ congruent cubes $R_j$. Each of them satisfies $Q_0\subset 3R_j$. Next, in the same way cover
$9Q_0\setminus 3Q_0$, and so on. The union of resulting cubes, including $Q_0$, will satisfy the desired property.

Having such a partition, apply (\ref{itpart}) to each $R_j$. We obtain a $\frac{1}{2}$-sparse family ${\mathcal F}_j\subset {\mathcal D}(R_j)$ such that
$$
\int_{R_j}|T(f)||g|\le \sum_{Q\in {\mathcal F_j}}(A+B)\langle f\rangle_{r,3Q}\langle g\rangle_{s,Q}|Q|
$$
Therefore,
$$
\int_{{\mathbb R}^n}|T(f)||g|\le \sum_{Q\in {\mathcal \cup_{j}{\mathcal F}_j}}(A+B)\langle f\rangle_{r,3Q}\langle g\rangle_{s,Q}|Q|
$$
Notice that the family $\cup_j{\mathcal F}_j$ is $\frac{1}{2}$-sparse as a disjoint union of $\frac{1}{2}$-sparse families.
Hence, setting ${\mathcal S}=\{3Q: Q\in \cup_j{\mathcal F}_j\}$, we obtain that ${\mathcal S}$ is $\frac{1}{2\cdot 3^n}$-sparse, and
(\ref{mainin}) holds.
\end{proof}

Given $1\le p\le \infty$, define the maximal operator ${\mathscr M}_{p,T}$ by
$${\mathscr M}_{p,T}f(x)=\sup_{Q\ni x}\left(\frac{1}{|Q|}\int_Q|T(f\chi_{{\mathbb R}^n\setminus 3Q})|^pdy\right)^{1/p}$$
(in the case $p=\infty$ set ${\mathscr M}_{p,T}f(x)=M_Tf(x)$).

\begin{cor}\label{mpt}
Let $1\le q\le r$ and $s\ge 1$. Assume that $T$ is a sublinear operator of weak type $(q,q)$, and ${\mathscr M}_{s',T}$ is of weak type $(r,r)$.
Then, for every compactly supported $f\in L^r({\mathbb R}^n)$ and every $g\in L^s_{loc}$, there exists a $\frac{1}{2\cdot 3^n}$-sparse family ${\mathcal S}$ such that
$$
|\langle Tf,g \rangle|\le K\sum_{Q\in {\mathcal S}}\langle f\rangle_{r,Q}\langle g\rangle_{s,Q}|Q|,
$$
where
$$K=C_n\big(\|T\|_{L^q\to L^{q,\infty}}+\|{\mathscr M}_{s',T}\|_{L^r\to L^{r,\infty}}\big).$$
\end{cor}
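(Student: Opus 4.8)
The plan is to derive Corollary~\ref{mpt} from Theorem~\ref{pointsparse}. For this it suffices to check that the weak type $(r,r)$ bound for ${\mathscr M}_{s',T}$ forces ${\mathscr M}_T$ to map $L^r\times L^s$ into $L^{\nu,\infty}$, with the quantitative estimate
$$\|{\mathscr M}_T\|_{L^r\times L^s\to L^{\nu,\infty}}\le C_n\|{\mathscr M}_{s',T}\|_{L^r\to L^{r,\infty}};$$
granting this, Theorem~\ref{pointsparse} immediately yields both the sparse bound and the stated value of $K$.

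First I would establish the pointwise domination
$${\mathscr M}_T(f,g)(x)\le {\mathscr M}_{s',T}f(x)\cdot M_sg(x),\qquad M_sg:=M(|g|^s)^{1/s}$$
(with the convention $M_1g=Mg$ and ${\mathscr M}_{\infty,T}=M_T$ in the endpoint case $s=1$, $s'=\infty$). Indeed, for any cube $Q\ni x$, Hölder's inequality with exponents $s'$ and $s$ gives
$$\frac{1}{|Q|}\int_Q|T(f\chi_{{\mathbb R}^n\setminus 3Q})|\,|g|\,dy\le\left(\frac{1}{|Q|}\int_Q|T(f\chi_{{\mathbb R}^n\setminus 3Q})|^{s'}\,dy\right)^{1/s'}\langle g\rangle_{s,Q},$$
where the first factor is at most ${\mathscr M}_{s',T}f(x)$ and $\langle g\rangle_{s,Q}\le M_sg(x)$; taking the supremum over all $Q\ni x$ gives the claim. (For $s=1$ the middle step is just $\frac{1}{|Q|}\int_Q|T(\cdot)||g|\le\|T(\cdot)\|_{L^\infty(Q)}\langle g\rangle_{1,Q}$.)

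Next I would combine this with two standard facts. Since $M$ is of weak type $(1,1)$, the operator $M_s$ maps $L^s$ into $L^{s,\infty}$ with $\|M_sg\|_{L^{s,\infty}}\le C_n\|g\|_{L^s}$. Moreover, by Hölder's inequality in weak Lebesgue spaces, if $u\in L^{r,\infty}$ and $v\in L^{s,\infty}$ then $uv\in L^{\nu,\infty}$ with $\|uv\|_{L^{\nu,\infty}}\le C_{r,s}\|u\|_{L^{r,\infty}}\|v\|_{L^{s,\infty}}$; this follows from the inclusion $\{|uv|>\lambda\}\subset\{|u|>a\}\cup\{|v|>\lambda/a\}$ by estimating each of the two level sets and optimizing in $a>0$. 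Applying these with $u={\mathscr M}_{s',T}f$ and $v=M_sg$ yields
$$\|{\mathscr M}_T(f,g)\|_{L^{\nu,\infty}}\le C_n\|{\mathscr M}_{s',T}f\|_{L^{r,\infty}}\|M_sg\|_{L^{s,\infty}}\le C_n\|{\mathscr M}_{s',T}\|_{L^r\to L^{r,\infty}}\|f\|_{L^r}\|g\|_{L^s},$$
which is precisely the hypothesis required in Theorem~\ref{pointsparse}.

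No genuine obstacle arises: the whole substance of the statement is already contained in Theorem~\ref{pointsparse}, and the corollary is merely a reduction through Hölder's inequality. The only points requiring a little care are the bookkeeping at the endpoint $s=1$ (i.e. $s'=\infty$), where ${\mathscr M}_{s',T}$ is to be read as $M_T$ and the convention ${\mathscr M}_{\infty,T}=M_T$ from the definition of ${\mathscr M}_{p,T}$ is used, and the understanding that the constant written $C_n$ in the statement silently also absorbs the dependence on the fixed exponents $q,r,s$ — coming both from the weak-type Hölder constant $C_{r,s}$ above and from the choices of $A$ and $B$ inside the proof of Theorem~\ref{pointsparse}.
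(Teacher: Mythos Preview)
Your proposal is correct and follows essentially the same route as the paper: the pointwise H\"older domination ${\mathscr M}_T(f,g)\le {\mathscr M}_{s',T}f\cdot M_sg$, then H\"older's inequality for weak $L^p$ spaces combined with the weak type $(s,s)$ bound for $M_s$, and finally an appeal to Theorem~\ref{pointsparse}. Your additional remarks on the endpoint $s=1$ and on the dependence of the constant on $r,s$ are accurate side comments not spelled out in the paper.
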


\begin{proof}
By H\"older's inequality,
$${\mathscr M}_T(f,g)(x)\le {\mathscr M}_{s',T}f(x)M_{s}g(x),$$
where $M_sg=M(g^s)^{1/s}$.
From this, by H\"older's inequality for weak spaces (see \cite[p. 15]{G}) along with the weak type $(s,s)$ estimate for $M_s$,
$$
\|{\mathscr M}_T\|_{L^r\times L^s\to L^{\nu,\infty}}\le C_n\|{\mathscr M}_{s',T}\|_{L^r\to L^{r,\infty}}\quad(1/\nu=1/r+1/s),
$$
which, by Theorem \ref{pointsparse}, completes the proof.
\end{proof}

In order to apply Corollary \ref{mpt} to Theorem \ref{logest}, we first establish a relation between the $L^1\to L^{1,\infty}$ norms of the operators $M_{\la, T}$ and ${\mathscr M}_{p,T}$.

\begin{lemma}\label{equiv} Let $0<\ga\le 1$ and let $T$ be a sublinear operator. The following statements are equivalent:
\begin{enumerate}
\renewcommand{\labelenumi}{(\roman{enumi})}
\item
there exists $C>0$ such that for all $p\ge 1$,
$$\|{\mathscr M}_{p,T}f\|_{L^1\to L^{1,\infty}}\le Cp^{\ga};$$
\item
there exists $C>0$ such that for all $0<\la<1$,
$$\|M_{\la,T}f\|_{L^1\to L^{1,\infty}}\le C\Big(1+\log\frac{1}{\la}\Big)^{\ga}.$$
\end{enumerate}
\end{lemma}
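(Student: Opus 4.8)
\textbf{Proof plan for Lemma \ref{equiv}.}

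The plan is to exploit the precise way in which the rearrangement value $(T(f\chi_{{\mathbb R}^n\setminus 3Q})\chi_Q)^*(\la|Q|)$ and the local $L^p$ average $\left(\frac{1}{|Q|}\int_Q|T(f\chi_{{\mathbb R}^n\setminus 3Q})|^p\right)^{1/p}$ control each other on a fixed cube, and then to balance $p$ against $\la$ by the identification $p\approx 1+\log\frac1\la$, i.e. $\la\approx e^{-p}$. The two implications are carried out separately.

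First I would prove $(i)\Rightarrow(ii)$. Fix a cube $Q\ni x$ and abbreviate $h=T(f\chi_{{\mathbb R}^n\setminus 3Q})\chi_Q$. By the elementary rearrangement inequality $h^*(t)\le\left(\frac1t\int_0^\infty (h^*(u))^p\,du\right)^{1/p}$ applied at $t=\la|Q|$, one gets
$$
\big(h\chi_Q\big)^*(\la|Q|)\le \Big(\frac{1}{\la|Q|}\int_Q|h|^p\Big)^{1/p}=\la^{-1/p}\Big(\frac{1}{|Q|}\int_Q|T(f\chi_{{\mathbb R}^n\setminus 3Q})|^p\Big)^{1/p}.
$$
Taking the supremum over $Q\ni x$ gives the pointwise bound $M_{\la,T}f(x)\le\la^{-1/p}{\mathscr M}_{p,T}f(x)$ for every $p\ge 1$. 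Hence $\|M_{\la,T}\|_{L^1\to L^{1,\infty}}\le\la^{-1/p}\|{\mathscr M}_{p,T}\|_{L^1\to L^{1,\infty}}\le C\la^{-1/p}p^{\ga}$. Now choose $p=1+\log\frac1\la\ (\ge 1)$, so that $\la^{-1/p}=e^{(\log\frac1\la)/p}\le e$, and conclude $\|M_{\la,T}\|_{L^1\to L^{1,\infty}}\le eC\big(1+\log\frac1\la\big)^{\ga}$.

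For the converse $(ii)\Rightarrow(i)$, the idea is to recover the $L^p$ average from the full tail of rearrangement values, using the layer-cake/Cavalieri identity $\frac{1}{|Q|}\int_Q|h|^p=\frac{1}{|Q|}\int_0^{|Q|}(h^*(t))^p\,dt=\int_0^1(h^*(\la|Q|))^p\,d\la$. Substituting $h=T(f\chi_{{\mathbb R}^n\setminus 3Q})\chi_Q$ and bounding each $h^*(\la|Q|)$ by $M_{\la,T}f(x)$ (for $x\in Q$) gives, after taking the supremum over $Q\ni x$, the pointwise estimate
$$
{\mathscr M}_{p,T}f(x)\le\Big(\int_0^1\big(M_{\la,T}f(x)\big)^p\,d\la\Big)^{1/p}.
$$
The main work is then to pass this through the sublinear (quasi-norm) structure of $L^{1,\infty}$: apply the $L^1\to L^{1,\infty}$ bound of $(ii)$ at each fixed $\la$ and integrate in $\la$, which is most cleanly done via the distributional formulation $\|F\|_{L^{1,\infty}}=\sup_{t>0}t\,|\{F>t\}|$. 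Splitting $\{{\mathscr M}_{p,T}f>t\}$ according to dyadic blocks $\la\in[2^{-k-1},2^{-k}]$ and using $\|M_{2^{-k-1},T}f\|_{L^{1,\infty}}\le C(1+\log 2^{k+1})^{\ga}\le C'(k+2)^{\ga}\|f\|_{L^1}$ on each block, one bounds $t\,|\{{\mathscr M}_{p,T}f>t\}|$ by a convergent sum $\sum_{k\ge 0}2^{-k}\cdot(\text{polynomial in }k)\cdot\|f\|_{L^1}^{?}$; here one must be careful that $L^{1,\infty}$ is only a quasi-Banach space, so the triangle-type inequality for a countable sum costs a fixed geometric factor but no loss in the power of $p$. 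Optimizing the split point (taking roughly $2^{-k}\sim$ the threshold dictated by $t$) and using $\big(\int_0^1(1+\log\frac1\la)^{\ga p}\,d\la\big)^{1/p}\le C p^{\ga}$ (a Gamma-function estimate: $\int_0^1(1+\log\frac1\la)^{\ga p}\,d\la=e\int_1^\infty u^{\ga p}e^{-u}\,du=e\,\Gamma(\ga p+1)$, and $\Gamma(\ga p+1)^{1/p}\lesssim p^{\ga}$ by Stirling) yields $\|{\mathscr M}_{p,T}f\|_{L^1\to L^{1,\infty}}\le C p^{\ga}$.

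The step I expect to be the main obstacle is the second implication, specifically handling the interchange of the $\la$-integral with the weak-$L^1$ quasi-norm: one cannot simply apply Minkowski's integral inequality in $L^{1,\infty}$, so the estimate has to be organized as a genuine distributional-inequality argument over dyadic $\la$-blocks, tracking the polynomial-in-$p$ constant through a countable quasi-subadditivity. Once that bookkeeping is set up, the remaining ingredients — the rearrangement inequalities and the Gamma-function bound $\Gamma(\ga p+1)^{1/p}\lesssim p^{\ga}$ — are routine.
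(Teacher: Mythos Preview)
Your argument for $(i)\Rightarrow(ii)$ is correct and coincides with the paper's: the pointwise Chebyshev bound $M_{\la,T}f\le\la^{-1/p}{\mathscr M}_{p,T}f$ followed by the choice $p\approx 1+\log\frac1\la$. The starting point you give for $(ii)\Rightarrow(i)$, namely
$${\mathscr M}_{p,T}f(x)\le\Big(\int_0^1 M_{\la,T}f(x)^p\,d\la\Big)^{1/p},$$
is also exactly the paper's.

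The gap is in how you propose to pass this through $L^{1,\infty}$. The dyadic-block/union-bound scheme loses an extra factor of $p$. Concretely, writing $\int_0^1 M_{\la,T}f(x)^p\,d\la\le\sum_{k\ge0}2^{-k-1}M_{2^{-k-1},T}f(x)^p$ and using any weights $\beta_k$ with $\sum\beta_k\le1$, one gets
$$t\,\big|\{{\mathscr M}_{p,T}f>t\}\big|\le\sum_{k\ge0}t\,\big|\big\{M_{2^{-k-1},T}f>(2^{k+1}\beta_k)^{1/p}t\big\}\big|\le C\|f\|_{L^1}\sum_{k\ge0}\frac{(k+2)^\ga}{(2^{k+1}\beta_k)^{1/p}},$$
and a Lagrange-multiplier optimisation shows the right-hand side is $\gtrsim p^{\ga+1}$ for \emph{every} admissible choice of $(\beta_k)$; the slow decay $2^{-k/p}$ is what kills you. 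The Gamma-function identity you quote, $(\int_0^1(1+\log\frac1\la)^{\ga p}d\la)^{1/p}\le Cp^\ga$, is correct but would only yield $(i)$ directly via a Minkowski-type inequality $\|(\int_0^1 F_\la^p\,d\la)^{1/p}\|_{L^{1,\infty}}\le(\int_0^1\|F_\la\|_{L^{1,\infty}}^p d\la)^{1/p}$, and no such inequality holds in $L^{1,\infty}$. Your assertion that the countable quasi-subadditivity ``costs a fixed geometric factor but no loss in the power of $p$'' is precisely the point that fails.

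The paper avoids this by a bootstrap rather than a sum. One truncates, setting $G_{p,N}f=(\int_0^1\min(M_{\la,T}f,N)^p d\la)^{1/p}$, and splits the $\la$-integral at $\la_0=2^{-kp}$. On $[\la_0,1]$ the integrand is dominated by $M_{\la_0,T}f$, which $(ii)$ controls with constant $\lesssim(kp)^\ga$. On $[0,\la_0]$, H\"older with exponent $k$ gives $(\int_0^{\la_0}\cdots^p d\la)^{1/p}\le 2^{-(k-1)}G_{kp,N}f$. Thus
$$G_{p,N}f\le 2^{-(k-1)}G_{kp,N}f+M_{2^{-kp},T}f,$$
which, read at the level of distribution functions, is a recursion one iterates. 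With $k=5$ the ratio $k^\ga/2^{k-2}\le5/8<1$ (here $\ga\le1$ is used), so the error terms sum geometrically to $Cp^\ga$; the truncation by $N$ forces the leading term $\mu_{G_{5^jp,N}f}(8^j\a)$ to vanish for $j$ large, and one lets $N\to\infty$ at the end. This self-improving step, not a direct summation over $\la$-blocks, is the missing idea.
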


\begin{proof}
Let us show that (i)$\Rightarrow$(ii). By Chebyshev's inequality,
$$M_{\la, T}f(x)\le \frac{1}{\la^{1/p}}{\mathscr M}_{p,T}f(x),$$
which implies
$$\|M_{\la,T}f\|_{L^{1,\infty}}\le \frac{1}{\la^{1/p}}\|{\mathscr M}_{p,T}f\|_{L^{1,\infty}}\le C\frac{p^{\ga}}{\la^{1/p}}\|f\|_{L^1}.$$
Setting here $p=\max(1,\log\frac{1}{\la})$, we obtain (ii).

Turn to the implication (ii)$\Rightarrow$(i). First, observe that
$$\left(\frac{1}{|Q|}\int_Q|T(f\chi_{{\mathbb R}^n\setminus 3Q})|^pdy\right)^{1/p}=\left(\int_0^1\big(T(f\chi_{{\mathbb R}^n\setminus 3Q})\chi_Q\big)^*\big(\la|Q|\big)^pd\la\right)^{1/p},$$
which implies
\begin{equation}\label{po}
{\mathscr M}_{p,T}f(x)\le \left(\int_0^1M_{\la,T}f(x)^pd\la\right)^{1/p}.
\end{equation}

For $N>0$ denote
$$G_{p,T,N}f(x)=\left(\int_0^1\min(M_{\la,T}f(x),N)^pd\la\right)^{1/p}.$$
Set also $\mu_{f}(\a,R)=|\{|x|\le R:|f(x)|>\a\}|$ for $R,\a>0$.

Let $k>1$ that will be specified later. By H\"older's inequality,
\begin{eqnarray*}
G_{p,T,N}f(x)&\le& \left(\int_0^{1/2^{kp}}\min(M_{\la,T}f(x),N)^pd\la\right)^{1/p}
+M_{1/2^{kp},T}f(x)\\
&\le& \frac{1}{2^{k-1}}G_{kp,T,N}f(x)+M_{1/2^{kp},T}f(x).
\end{eqnarray*}
Hence,
\begin{eqnarray*}
\mu_{G_{p,T,N}f}(\a, R)&\le& \mu_{G_{kp,T,N}f}(2^{k-2}\a,R)+\mu_{M_{1/2^{kp},T}f}(\a/2,R)\\
&\le& \mu_{G_{kp,T,N}f}(2^{k-2}\a,R)+C\frac{(kp)^{\ga}}{\a}\|f\|_{L^1}.
\end{eqnarray*}

Iterating this estimate, after the $j$-th step we obtain
$$
\mu_{G_{p,T,N}f}(\a,R)\le \mu_{G_{k^jp,T,N}f}(2^{(k-2)j}\a,R)+\frac{2^{k-2}C}{\a}\sum_{i=1}^j\Big(\frac{k^{\ga}}{2^{k-2}}\Big)^ip^{\ga}\|f\|_{L^1}.
$$
Take here $k=5$. Since $G_{p,T,N}$ is bounded uniformly in $p$, we obtain that
$\mu_{G_{5^jp,T,N}f}(8^j\a,R)=0$ starting from some $j$ big enough.
Hence, letting $j\to\infty$ in the above estimate yields
$$
\mu_{G_{p,T,N}f}(\a,R)\le \frac{C}{\a}\sum_{i=1}^{\infty}\Big(\frac{5}{8}\Big)^ip^{\ga}\|f\|_{L^1}\le \frac{C'}{\a}p^{\ga}\|f\|_{L^1}.
$$
Letting here $N,R\to \infty$ and applying (\ref{po}) completes the proof.
\end{proof}

Now, we are ready to show that Theorem \ref{logest} provides a different approach to (\ref{rough}).

\begin{cor}\label{rou} Let $T_{\O}$ be a rough homogeneous singular integral with $\O\in L^{\infty}(S^{n-1})$.
Then, for every compactly supported $f\in L^p({\mathbb R}^n)$ and every $g\in L^1_{loc}$, there exists a $\frac{1}{2\cdot 3^n}$-sparse family ${\mathcal S}$ such that
$$
|\langle T_{\O}f,g \rangle|\le C_np'\|\O\|_{L^{\infty}(S^{n-1})}\sum_{Q\in {\mathcal S}}\langle f\rangle_{p,Q}\langle g\rangle_{1,Q}|Q|\quad(p>1).
$$
\end{cor}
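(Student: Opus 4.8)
The plan is to deduce Corollary~\ref{rou} from Corollary~\ref{mpt} by choosing the right exponents and verifying its two hypotheses. I would take $q=1$, $s=1$ (so that $s'=\infty$ and $\langle g\rangle_{s,Q}=\langle g\rangle_{1,Q}$ as in the statement), and $r=p>1$. With these choices Corollary~\ref{mpt} applies to $T=T_{\O}$ provided (a) $T_{\O}$ is of weak type $(1,1)$, and (b) ${\mathscr M}_{s',T_{\O}}={\mathscr M}_{\infty,T_{\O}}=M_{T_{\O}}$ is of weak type $(p,p)$, and then yields the $\frac{1}{2\cdot 3^n}$-sparse bound with constant $K=C_n(\|T_{\O}\|_{L^1\to L^{1,\infty}}+\|M_{T_{\O}}\|_{L^p\to L^{p,\infty}})$. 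So the whole task reduces to bounding these two quantities by $C_np'\|\O\|_{L^{\infty}}$.

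For the first term, since $\O\in L^{\infty}(S^{n-1})\subset L\log L(S^{n-1})$, Seeger's theorem gives $\|T_{\O}\|_{L^1\to L^{1,\infty}}\le C_n\|\O\|_{L^{\infty}}$ (one may cite \cite{S}; the $L\log L$ norm of a bounded $\O$ is controlled by $C_n\|\O\|_{L^{\infty}}$), which is even better than the $p'\|\O\|_{L^{\infty}}$ we need. The second term is where the work of this paper enters. Theorem~\ref{logest} gives
$$\|M_{\la,T_{\O}}\|_{L^1\to L^{1,\infty}}\le C_n\|\O\|_{L^{\infty}}\Big(1+\log\tfrac1\la\Big)\qquad(0<\la<1),$$
i.e.\ statement~(ii) of Lemma~\ref{equiv} holds with $\ga=1$ and $C=C_n\|\O\|_{L^{\infty}}$. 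By the implication (ii)$\Rightarrow$(i) of Lemma~\ref{equiv}, this yields
$$\|{\mathscr M}_{p,T_{\O}}\|_{L^1\to L^{1,\infty}}\le C_n\|\O\|_{L^{\infty}}\,p\qquad(p\ge 1).$$
I would then interpolate between this $L^1\to L^{1,\infty}$ bound and a trivial $L^\infty$ bound to pass from ${\mathscr M}_{p,T_{\O}}$ on $L^1$ to $M_{T_{\O}}={\mathscr M}_{\infty,T_{\O}}$ on $L^p$.

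The one genuine point to be careful about is the passage from $\|{\mathscr M}_{p,T_{\O}}\|_{L^1\to L^{1,\infty}}\lesssim p$ to $\|M_{T_{\O}}\|_{L^p\to L^{p,\infty}}\lesssim p'$. The mechanism is: ${\mathscr M}_{p,T_{\O}}f(x)=\big(\int_0^1 M_{\la,T_{\O}}f(x)^p\,d\la\big)^{1/p}$ (this identity already appears in the proof of Lemma~\ref{equiv}), so in particular $M_{p/(p-1),\,T_{\O}}$-type quantities and their $L^p$ behaviour are linked to $M_{\la,T_{\O}}$ on $L^1$ via a standard real-interpolation / distribution-function argument with the parameter $\la$; concretely one writes $M_{T_{\O}}f=\sup_\la M_{\la,T_{\O}}f$ and splits the $\la$-range at a level depending on the $L^1$- and $L^\infty$-sizes of $f$, using $\|M_{\la,T_{\O}}\|_{L^\infty\to L^\infty}\le C_n\|\O\|_{L^{\infty}}$ (immediate from the kernel bound $|K(x)|\le\|\O\|_{L^{\infty}}|x|^{-n}$ together with the weak $(1,1)$ bound applied at scale $3Q$) and the $L^1\to L^{1,\infty}$ bound with its linear-in-$p$ growth, and optimizes. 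The output of this optimization is exactly the factor $p'=\frac{p}{p-1}$ as $p\to1$, matching \cite{CCPO}. I expect this interpolation-and-optimization step to be the main (though still routine) obstacle; everything else is a direct citation of Theorem~\ref{logest}, Lemma~\ref{equiv}, Corollary~\ref{mpt}, and Seeger's weak $(1,1)$ theorem.
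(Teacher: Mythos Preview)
Your choice of exponents $q=1$, $s=1$, $r=p$ forces you to control $\mathscr{M}_{s',T_{\O}}=\mathscr{M}_{\infty,T_{\O}}=M_{T_{\O}}$ in $L^p\to L^{p,\infty}$, and this is where the argument breaks down. The claimed endpoint $\|M_{\la,T_{\O}}\|_{L^\infty\to L^\infty}\le C_n\|\O\|_{L^\infty}$ is simply false: for $f\in L^\infty$ and $\xi\in Q$, the kernel bound only gives
\[
|T_{\O}(f\chi_{{\mathbb R}^n\setminus 3Q})(\xi)|\le \|\O\|_{L^\infty}\|f\|_{L^\infty}\int_{|\xi-y|\gtrsim \ell_Q}\frac{dy}{|\xi-y|^n},
\]
and the last integral diverges. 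Invoking ``the weak $(1,1)$ bound at scale $3Q$'' does not help either, since $f\chi_{{\mathbb R}^n\setminus 3Q}$ need not lie in $L^1$. Without a second endpoint there is nothing to interpolate against, and the paper gives no independent route to $\|M_{T_{\O}}\|_{L^p\to L^{p,\infty}}\lesssim p'$; recall from the introduction that controlling $M_{T_{\O}}$ is precisely what one \emph{cannot} do for rough kernels, which is the whole reason the operators $M_{\la,T_{\O}}$ and $\mathscr{M}_{p,T_{\O}}$ were introduced.

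The paper sidesteps this by choosing the exponents the other way: take $q=r=1$ and $s=p$ in Corollary~\ref{mpt}. Then $s'=p'<\infty$, so the operator one needs is $\mathscr{M}_{p',T_{\O}}$, and Lemma~\ref{equiv} (applied with exponent $p'$) together with Theorem~\ref{logest} gives directly $\|\mathscr{M}_{p',T_{\O}}\|_{L^1\to L^{1,\infty}}\le C_n\|\O\|_{L^\infty}p'$. Combined with Seeger's weak $(1,1)$ bound for $T_{\O}$, Corollary~\ref{mpt} then produces a sparse family with
\[
|\langle T_{\O}f,g\rangle|\le C_n p'\|\O\|_{L^\infty}\sum_{Q\in\mathcal S}\langle f\rangle_{1,Q}\langle g\rangle_{p,Q}|Q|.
\]
The averages are on the ``wrong'' functions, but since $T_{\O}$ is essentially self-adjoint one applies this to $T_{\O}^*$ and interchanges $f$ and $g$ to obtain the stated form. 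No interpolation and no control of $M_{T_{\O}}$ is needed.
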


\begin{proof} By Theorem \ref{logest} along with Lemma \ref{equiv} with $\ga=1$,
$$\|{\mathscr M}_{p,T_{\O}}\|_{L^1\to L^{1,\infty}}\le C_n\|\O\|_{L^{\infty}(S^{n-1})}p\quad(p\ge 1).$$
Also, by \cite{S}, $\|T_{\O}\|_{L^1\to L^{1,\infty}}\le C_n\|\O\|_{L^{\infty}(S^{n-1})}.$
Hence, by Corollary \ref{mpt} with $q=r=1$ and $s=p$, there exists a $\frac{1}{2\cdot 3^n}$-sparse family ${\mathcal S}$ such that
$$
|\langle T_{\O}f,g \rangle|\le C_np'\|\O\|_{L^{\infty}(S^{n-1})}\sum_{Q\in {\mathcal S}}\langle f\rangle_{1,Q}\langle g\rangle_{p,Q}|Q|\quad(p>1).
$$
Since the operator $T_{\O}$ is essentially self-adjoint, the same estimate holds for the adjoint operator $T_{\O}^*$. Replacing in the above estimate
$T_{\O}$ by $T_{\O}^*$ and interchanging $f$ and $g$ completes the proof.
\end{proof}

\begin{remark}
Theorem \ref{pointsparse} and Corollary \ref{mpt} can be easily generalized by means of replacing the normalized $L^p$ averages by
the normalized Orlicz averages $\|f\|_{\f,Q}$ defined by
$$
\|f\|_{\f,Q}=\inf\Big\{\a>0:\frac{1}{|Q|}\int_Q\f(|f(y)|/\a)dy\le 1\Big\}.
$$

We mention only one interesting particular case of such a generalization.
Denote $\|f\|_{L\log L,Q}$ if $\f(t)=t\log({\rm{e}}+t)$ and $\|f\|_{\text{exp}L,Q}$ if $\f(t)=e^t-1$.
Given an operator $T$ define the maximal operator $M_{\text{exp} L,T}$ by
$$M_{\text{exp} L,T}f(x)=\sup_{Q\ni x}\|T(f\chi_{{\mathbb R}^n\setminus 3Q})\|_{\text{exp} L,Q}.$$
Then if $T$ and $M_{\text{exp} L,T}$ are of weak type $(1,1)$, for every appropriate $f$ and $g$, there exists a sparse family ${\mathcal S}$ such
that
$$
|\langle Tf,g \rangle|\le K\sum_{Q\in {\mathcal S}}\langle f\rangle_{1,Q}\|g\|_{L\log L,Q}|Q|,
$$
where $K=C_n(\|T\|_{L^1\to L^{1,\infty}}+\|M_{\text{exp} L,T}\|_{L^1\to L^{1,\infty}})$.

In particular, we conjecture that if $T_{\O}$ is a rough homogeneous singular integral with $\O\in L^{\infty}(S^{n-1})$, then $M_{\text{exp} L,T_{\O}}$
is of weak type $(1,1)$. This would imply a small improvement of (\ref{rough}) with $p'\langle f\rangle_{p,Q},p>1,$ replaced by $\|f\|_{L\log L,Q}$.
\end{remark}

\end{document}